\newtheorem{thm}{Theorem}[section]
\newtheorem{lem}[thm]{Lemma}
\newtheorem{cor}[thm]{Corollary}
\newtheorem{prop}[thm]{Proposition}
\theoremstyle{definition}
\newtheorem{example}[thm]{Example}
\newtheorem{defn}[thm]{Definition}
\newenvironment{axiom}{\begin{list}{$\bullet$}{\setlength{\labelsep}{.7cm}
\setlength{\leftmargin}{2.5cm}\setlength{\rightmargin}{0cm}%
\setlength{\labelwidth}{1.8cm}\setlength{\itemsep}{0pt}}}{\end{list}}
\newcommand{\ax}[1]{\item[{\bf #1}\hfill]\index{#1}}
\def\bN{\mathbb{N}}
\def\bQ{\mathbb{Q}}
\def\bR{\mathbb{R}}
\begin{document}

\title[Partial $n$-metric spaces and fixed point theorems]{Partial $n$-metric spaces and fixed point theorems}
\author{Samer Assaf}
\address{Department of Mathematics and Statistics,
University of Saskatchewan, 106 Wiggins Road,
Saskatoon, Saskatchewan, Canada S7N 5E6}
\email{ska680@mail.usask.ca}
\author{Koushik Pal}
\address{Department of Mathematics and Statistics,
University of Saskatchewan, 106 Wiggins Road,
Saskatoon, Saskatchewan, Canada S7N 5E6}
\email{koushik.pal@usask.ca}
\thanks{The second-named author is partially supported by a PIMS Postdoctoral Fellowship.}

\subjclass[2010]{Primary 47H10, Secondary 37C25, 54H25}

\begin{abstract}\noindent
In this paper we combine the notions of partial metric spaces with negative distances, $G_p$-metric spaces and $n$-metric spaces together into one structure called the partial $n$-metric spaces. These are generalizations of all the said structures, and also generalize the notions of $G$-metric and $G_p$-metric spaces to arbitrary finite dimension. We prove Cauchy mapping theorems and other fixed point theorems for such spaces.
\end{abstract}
\maketitle

\section{Introduction}
G{\"a}hler \cite{Gahler1, Gahler2} introduced the notion of 2-metric spaces as a possible generalization of metric spaces. The 2-metric $d(x, y, z)$ is a function of 3 variables, and was intended by G{\"a}hler to be geometrically interpreted as the area of a triangle with vertices at $x$, $y$ and $z$ respectively. However, as several authors (for example, \cite{KSHYJCAW}) pointed out, G{\"a}hler's construction is not a generalization of, rather is independent of, metric spaces. There are results that hold in one but not the other. 

This led B. C. Dhage, in his PhD thesis in 1992, to introduce the notion of a $D$-metric \cite{BDAPBR} that does, in fact, generalize metric spaces. Geometrically, $D(x, y, z)$ can be interpreted as the perimeter of a triangle with vertices at $x$, $y$ and $z$. Subsequently, Dhage published a series of papers attempting to develop topological structures in such spaces and prove several fixed point results. 

In 2003, Mustafa and Sims \cite{ZMBS_rem} demonstrated that most of the claims concerning the fundamental topological properties of $D$-metric spaces are incorrect. This led them to introduce the notion of a $G$-metric \cite{ZMBS_GM}. The interpretation of the perimeter of a triangle applies to a $G$-metric too. Since then, many authors have obtained fixed point results for $G$-metric spaces.

In an attempt to generalize the notion of a $G$-metric space to more than three variables, Khan first introduced the notion of a $K$-metric, and later the notion of a generalized $n$-metric space (for any $n\ge 2$) \cite{KAK_nTop, KAK_nmetric}. He also proved a common fixed point theorem for such spaces.

In a completely different direction Steve Matthews, in his PhD thesis in 1992, introduced the notion of a partial metric space $(X, p)$ \cite{SM1, SM2}, which is also a generalization of a metric space with the essential difference that a point $x\in X$ is allowed to have a nonzero self-distance, i.e., $p(x, x)$ can be nonzero. Matthews proved a Contraction Mapping Theorem for such spaces that generalizes Banach's Fixed Point Theorem for metric spaces. S. J. O'Neill, in his PhD thesis in 1996, generalized Matthews' definition of a partial metric to allow for negative distances \cite{SJON}. In a recent work \cite{SAKP_PM}, the authors have improved on Matthews' Contraction Mapping Theorem in several ways, one of which is to make it work for partial metric spaces with negative distances in the sense of O'Neill.

To combine the two notions of a $G$-metric and a partial metric, Zand and Nezhad introduced the notion of a $G_p$-metric\cite{AZDN} in 2011. They worked with partial metric spaces in the sense of Matthews and proved a fixed point theorem for such spaces.

In this paper, we combine all these concepts together. We work with partial metric spaces in the sense of O'Neill, i.e., we allow for negative distances, and combine them with $G$-metric and generalized $n$-metric spaces to get a structure that generalizes $G_p$-metric to arbitrary finite dimension. We call such structures ``partial $n$-metric spaces''. We also present a new definition of an $n$-metric space that generalizes Khan's definition of a generalized $n$-metric space. Parallel to our earlier work on partial metric spaces with negative values (cf. \cite{SAKP_PM}), we also prove Cauchy Mapping Theorems and other fixed point theorems for partial $n$-metric spaces in this paper. We should mention here that in our fixed point theorems we always restrict ourselves to the orbits of the concerned function and use conditions weaker than completeness, for example orbital completeness (cf. Definition~\ref{orbcomp}), to get a fixed point. Also, because we work with orbits, we potentially have a fixed point for each orbit of the function, which essentially means we loose the uniqueness of a fixed point.

\section{Definitions}
{\bf Notation:} We denote a sequence $\langle x_1, \ldots, x_k\rangle$ by $\langle x_i\rangle_{i=1}^k$. We denote a constant sequence $\langle x, x, \ldots, x\rangle$ of length $k$ by $\langle x\rangle_{i=1}^k$, or simply by $\langle x\rangle_1^k$ (if the index $i$ is understood from the context).\newline

For the rest of this paper, we fix $n\in\bN$ with $n\ge 2$. The following is the fundamental definition of this paper.
\begin{defn}
A pair $(X, G)$ is called a {\em partial $n$-metric space} if $X$ is a nonempty set and $G:X^n \to \bR$ is a function (called the {\em partial $n$-metric}) that satisfies the following four conditions for all $n$-tuples $\langle x_1, \ldots, x_n\rangle\in X^n$ and for all $x, y\in X$:
\begin{axiom}
\ax{(sep)} $G(\langle x \rangle_1^{n-1}, y) = G(\langle x\rangle_1^n) \wedge  G(\langle y \rangle_1^{n-1}, x) = G(\langle y\rangle_1^n) \iff x = y$,
\ax{(ssd)} $G(\langle x\rangle_1^n) \le G(\langle x\rangle_1^{n-1}, y)$, 
\ax{(sym)} $G(\langle x_i\rangle_{i=1}^n) = G(\langle x_{\pi(i)}\rangle_{i=1}^n)\;\;\;\;\;\mbox{ where $\pi$ is a permutation of } \{1, \ldots, n\}$,
\ax{(ptri)} $G(\langle x_i\rangle_{i=1}^n) \le G(\langle x_i\rangle_{i=1}^{n-1}, y) + G(\langle y\rangle_1^{n-1}, x_n) - G(\langle y\rangle_1^n)$.
\end{axiom}
We say $(X, G)$ is {\em strong} if conditions {\bf(sep)} and {\bf(ssd)} are replaced by the condition
\begin{axiom}
\ax{(sssd)} $G(\langle x\rangle_1^n) < G(\langle x\rangle_1^{n-1}, y)$ for all $x, y\in X$ with $x\neq y$. 
\end{axiom}
And we say {\em the self distances of $(X, G)$ are bounded below by $r\in\bR$} if for all $x\in X$
$$r \le G(\langle x\rangle_1^n).$$
\end{defn}

It should be noted that even though we don't mention the ``{\em separation}'' condition {\bf(sep)} explicitly for strong partial $n$-metric spaces, it follows from condition {\bf(sssd)} : if $x \not=y$, then $G(\langle x\rangle_1^n) < G(\langle x\rangle_1^{n-1}, y)$; therefore, $G(\langle x\rangle_1^n) = G(\langle x\rangle_1^{n-1}, y)\implies x = y$. So, strong partial $n$-metric spaces are indeed partial $n$-metric spaces. We show later (cf. Lemma~\ref{sepequiv}) that the separation condition {\bf(sep)} is in fact equivalent to the more general condition
\begin{axiom}
\ax{(sep')} $G(\langle x\rangle_1^n) = \cdots = G(\langle x\rangle_1^{n-k}, \langle y\rangle_1^k) =  \cdots = G(\langle y\rangle_1^n) \iff x = y.$
\end{axiom}
In Section 3, we also show that there is a natural topology associated with a partial $n$-metric that makes $(X, G)$ a topological space.

Our main goal in this paper is to prove fixed point theorems for partial $n$-metric spaces. To do that, one usually needs to place some conditions on the function and/or on the underlying space. To that end, we use the following definitions, which are generalized versions of those given by Matthews \cite{SM1, SM2}.
\begin{defn}		\label{cauchylimit}
Let $(X, G)$ be a partial $n$-metric space. A sequence $\langle x_m\rangle_{m\in\bN}$ is called a {\em Cauchy sequence in $(X, G)$} if there is some $r\in\bR$ such that
$$\lim_{m_1, \ldots, m_n\to\infty}G(x_{m_1}, \ldots, x_{m_n}) = r.$$
An element $a\in X$ is called a {\em limit} of the sequence $\langle x_m\rangle_{m\in\bN}$ if
$$\lim_{m\to\infty}G(\langle a\rangle_1^{n-1}, x_m) = G(\langle a\rangle_1^n).$$
An element $a\in X$ is called a {\em special limit} of the sequence $\langle x_m\rangle_{m\in\bN}$ if
$$\lim_{m_1, \ldots, m_n\to\infty}G(x_{m_1}, \ldots, x_{m_n}) = \lim_{m\to\infty}G(\langle a\rangle_1^{n-1}, x_m) = G(\langle a\rangle_1^n).$$
Finally, a partial $n$-metric space $(X, G)$ is called {\em complete} if every Cauchy sequence $\langle x_m\rangle_{m\in\bN}$ in $(X, G)$ converges to a special limit $a\in X$.
\end{defn}

We show later (cf. Lemmas~\ref{equivlimitdefns} \& \ref{equivspcllimitdefns}) that these definitions of a limit and a special limit coincide with the topological definitions in two respective topologies developed in the next section. We also give equivalent definitions later of a Cauchy sequence (cf. Lemma~\ref{equivCauchy}) and of a special limit (cf. Lemma~\ref{equivspllimit}) that are more helpful in calculations. Although a limit of a sequence is not unique in general in a partial $n$-metric space, we show later that a special limit of a Cauchy sequence, if it exists, is in fact unique (cf.~Lemma~\ref{spllimitunique}). We also need some conditions on the function as given by the following definition. A remark about the notation: we write $fx$ for $f(x)$ and $f^mx$ for $f^{\circ m}(x)$.

\begin{defn}		\label{contmap}
Let $(X, G)$ be a partial $n$-metric space and $f:X\to X$ be a map. We say $f$ is {\em non-expansive} if it satisfies
$$G(\langle fx_i\rangle_{i=1}^n)\le G(\langle x_i\rangle_{i=1}^n)\;\;\;\;\;\;\forall \langle x_1, \ldots, x_n\rangle \in X^n.$$
\end{defn}
%
%
%
\begin{defn}		\label{cauchyatx0}
Given a partial $n$-metric space $(X, G)$, an element $x_0\in X$, and a map $f:X\to X$, we say $f$ is {\em Cauchy at $x_0$} if $\langle f^mx_0\rangle_{m\in\bN}$ (the orbit of $x_0$ under $f$) is a Cauchy sequence in $(X, G)$. And we say $f$ is {\em Cauchy} if $f$ is Cauchy at every $x\in X$.
\end{defn}

\begin{defn}		\label{orbcont}
Given a partial $n$-metric space $(X, G)$, elements $x_0, z_0\in X$, and a map $f:X\to X$, we say $f$ is {\em orbitally continuous at $x_0$ for $z_0$} if
$$z_0 \mbox{ is a limit of } \langle f^mx_0\rangle_{m\in\bN} \implies fz_0 \mbox{ is a limit of } \langle f^{m}x_0\rangle_{m\in\bN}$$
i.e.,
$$\lim_{m\to\infty}G(\langle z_0\rangle_1^{n-1}, f^mx_0) = G(\langle z_0\rangle_1^n) \implies \lim_{m\to\infty}G(\langle fz_0\rangle_1^{n-1}, f^{m}x_0) = G(\langle fz_0\rangle_1^n).$$

We say $f$ is {\em orbitally continuous at $x_0$} if $f$ is orbitally continuous at $x_0$ for every $z\in X$.
And, we say $f$ is {\em orbitally continuous} if it is orbitally continuous at every $x\in X$.
\end{defn}

The following are the main results of this paper.
\begin{thm}[Cauchy Mapping Theorem for Partial $n$-Metric Spaces]		\label{cauchyorbcontnexpfpt}
Let $(X, G)$ be a partial $n$-metric space, $x_0\in X$ be an element, and $f:X\to X$ be a map such that $f$ is Cauchy at $x_0$ with special limit $a\in X$. Further assume one of the following holds:
\begin{enumerate}
\item $f$ is non-expansive and orbitally continuous at $x_0$ for $a$;
\item $f$ is orbitally continuous at $x_0$ for $a$ and the self distances of $(X, G)$ are bounded below by $G(\langle fa\rangle_1^n)$;
\item $f$ is non-expansive and the self distances of $(X, G)$ are bounded below by $G(\langle a\rangle_1^n)$.
\end{enumerate}
Then $a$ is a fixed point of $f$.
\end{thm}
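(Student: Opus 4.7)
\medskip
\noindent\textbf{Proof plan.} To show $fa = a$ it suffices, by axiom \textbf{(sep)}, to verify the two equalities
\[G(\langle a\rangle_1^{n-1}, fa) = \alpha \quad\text{and}\quad G(\langle fa\rangle_1^{n-1}, a) = \beta,\]
where $\alpha = G(\langle a\rangle_1^n)$ and $\beta = G(\langle fa\rangle_1^n)$. Axiom \textbf{(ssd)} already provides the ``$\ge$'' direction of each, so all the work lies in producing matching upper bounds.

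The crux is a preliminary strengthening of the special-limit hypothesis: for every $k = 1, \ldots, n-1$,
\[\lim_{m\to\infty} G(\langle a\rangle_1^k, \langle f^m x_0\rangle_1^{n-k}) = \alpha.\]
The upper bound is a downward induction on $k$: applying \textbf{(ptri)} with $y = a$ to the tuple $(\langle a\rangle_1^k, \langle f^m x_0\rangle_1^{n-k})$ yields
\[G(\langle a\rangle_1^k, \langle f^m x_0\rangle_1^{n-k}) \le G(\langle a\rangle_1^{k+1}, \langle f^m x_0\rangle_1^{n-k-1}) + G(\langle a\rangle_1^{n-1}, f^m x_0) - \alpha,\]
and telescoping up to $k = n-1$ (where the top term tends to $\alpha$ by the special-limit hypothesis) supplies the $\limsup$ bound. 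The matching $\liminf$ comes from \textbf{(ptri)} applied to $G(\langle a\rangle_1^n)$ with $y = f^m x_0$, which handles $k = 1$ via the Cauchy hypothesis and then iterates the same inequality upward. This is the content of Lemma~\ref{equivspllimit}, which may simply be cited.

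With this in hand, applying \textbf{(ptri)} with $y = f^m x_0$ to the tuples $(\langle a\rangle_1^{n-1}, fa)$ and $(\langle fa\rangle_1^{n-1}, a)$ and passing to the limit reduces each target inequality to a single $\limsup$ bound:
\[G(\langle a\rangle_1^{n-1}, fa) \le \limsup_m G(\langle f^m x_0\rangle_1^{n-1}, fa), \qquad G(\langle fa\rangle_1^{n-1}, a) \le \limsup_m G(\langle fa\rangle_1^{n-1}, f^m x_0).\]
Under hypothesis (1), non-expansiveness gives $G(\langle f^m x_0\rangle_1^{n-1}, fa) \le G(\langle f^{m-1} x_0\rangle_1^{n-1}, a) \to \alpha$ (preliminary step), while orbital continuity forces the second $\limsup$ to equal $\beta$. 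Under (3), non-expansiveness on the constant tuple $\langle a\rangle_1^n$ gives $\beta \le \alpha$ and the self-distance bound gives $\beta \ge \alpha$, so $\beta = \alpha$; a single use of non-expansiveness then bounds both $\limsup$'s by $\alpha$. Under (2), orbital continuity handles the second $\limsup$ as in (1); for the first, I would rerun the preliminary step's telescoping \emph{anchored at $fa$} (using that $fa$ is a limit of the orbit via orbital continuity), obtaining $\limsup_m G(\langle f^m x_0\rangle_1^{n-1}, fa) \le \beta$, and the chain $\alpha \le G(\langle a\rangle_1^{n-1}, fa) \le \beta \le \alpha$ (ssd plus the self-distance lower bound $\alpha \ge \beta$) collapses to the needed equality.

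The main obstacle is the preliminary strengthening. The special-limit definition is asymmetric in the weight $k$, supplying only $k = n-1$, and without the telescoping-\textbf{(ptri)} lemma the main \textbf{(ptri)} reduction produces the unusable estimate $G(\langle a\rangle_1^{n-1}, fa) \le \limsup_m G(\langle f^m x_0\rangle_1^{n-1}, fa)$ with no available control of its right-hand side. Once that lemma is in place, each of the three cases provides precisely the right tool---non-expansiveness to ``un-apply'' $f$ inside $G$, or orbital continuity as its substitute---to close the two $\limsup$ bounds.
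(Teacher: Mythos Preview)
Your approach is correct and essentially identical to the paper's: both reduce the two \textbf{(sep)} equalities to $\limsup$ bounds via \textbf{(ptri)} with $y = f^m x_0$, control the auxiliary terms using the special-limit properties, and then invoke non-expansiveness or orbital continuity (plus \textbf{(ssd)} and the self-distance bound) to close each case---the paper merely packages the two halves into Lemmas~\ref{nonexpcriterion} and~\ref{orbcontcriterion} rather than doing it inline. One small correction: the ``preliminary strengthening'' you describe is Lemma~\ref{speciallimitprop} (parts (a) and (d)), not Lemma~\ref{equivspllimit}, and in fact you only ever use the case $k=1$ of it (i.e., $\lim_m G(\langle f^m x_0\rangle_1^{n-1}, a) = \alpha$), so the full induction over $k$ is unnecessary.
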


As it turns out, things are much simpler for a strong partial $n$-metric space as we need fewer conditions for a fixed point to exist.
\begin{thm}[Cauchy Mapping Theorem for Strong Partial $n$-Metric Spaces]		\label{strongcauchyorbcontnexpfpt}
Let $(X, G)$ be a strong partial $n$-metric space, $x_0\in X$, and $f:X\to X$ be a map such that $f$ is Cauchy at $x_0$ with special limit $a\in X$. Further assume one of the following holds:
\begin{enumerate}
\item $f$ is non-expansive;
\item $f$ is orbitally continuous at $x_0$ for $a$.
\end{enumerate}
Then $a$ is a fixed point of $f$.
\end{thm}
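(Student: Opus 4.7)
The plan is, in each case, to deduce an inequality of the form $G(\langle a\rangle_1^{n-1}, fa) \le G(\langle a\rangle_1^n)$ or $G(\langle fa\rangle_1^{n-1}, a) \le G(\langle fa\rangle_1^n)$, at which point {\bf(sssd)} immediately forces $fa = a$, since a strict reverse inequality would otherwise be forced. The key preliminary step is the identity
$$\lim_{m\to\infty} G(\langle f^m x_0\rangle_1^{n-1}, a) = G(\langle a\rangle_1^n).$$
The lower bound comes directly from {\bf(ssd)}: $G(\langle f^m x_0\rangle_1^n) \le G(\langle f^m x_0\rangle_1^{n-1}, a)$, and the left side tends to $G(\langle a\rangle_1^n)$ by the Cauchy condition together with the special-limit hypothesis. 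For the upper bound, for $0 \le k \le n$ let $T_k(m)$ denote the value of $G$ on the (unordered) tuple consisting of $k$ copies of $a$ and $n-k$ copies of $f^m x_0$, well-defined by {\bf(sym)}. Applying {\bf(ptri)} with the last coordinate set to a copy of $f^m x_0$ and intermediary $y = a$ yields
$$T_k(m) \le T_{k+1}(m) + G(\langle a\rangle_1^{n-1}, f^m x_0) - G(\langle a\rangle_1^n).$$
Since $T_n(m) = G(\langle a\rangle_1^n)$ is constant and the correction term tends to zero by the special-limit hypothesis, descending induction on $k$ from $n$ to $1$ yields $\limsup_m T_k(m) \le G(\langle a\rangle_1^n)$; the case $k=1$ is the desired upper bound.

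For Case (2), apply {\bf(ptri)} with $x_1 = \cdots = x_{n-1} = fa$, $x_n = a$, and $y = f^m x_0$ to obtain
$$G(\langle fa\rangle_1^{n-1}, a) \le G(\langle fa\rangle_1^{n-1}, f^m x_0) + G(\langle f^m x_0\rangle_1^{n-1}, a) - G(\langle f^m x_0\rangle_1^n).$$
As $m \to \infty$, the first term on the right tends to $G(\langle fa\rangle_1^n)$ by orbital continuity at $x_0$ for $a$, the second tends to $G(\langle a\rangle_1^n)$ by the preliminary claim, and the third tends to $G(\langle a\rangle_1^n)$ by the Cauchy condition. Hence $G(\langle fa\rangle_1^{n-1}, a) \le G(\langle fa\rangle_1^n)$; combined with {\bf(ssd)} the two sides are equal, and {\bf(sssd)} forces $fa = a$.

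For Case (1), apply {\bf(ptri)} with $x_1 = \cdots = x_{n-1} = a$, $x_n = fa$, and $y = f^m x_0$:
$$G(\langle a\rangle_1^{n-1}, fa) \le G(\langle a\rangle_1^{n-1}, f^m x_0) + G(\langle f^m x_0\rangle_1^{n-1}, fa) - G(\langle f^m x_0\rangle_1^n).$$
Non-expansiveness applied to the tuple $(f^{m-1} x_0,\ldots,f^{m-1} x_0, a)$ gives $G(\langle f^m x_0\rangle_1^{n-1}, fa) \le G(\langle f^{m-1} x_0\rangle_1^{n-1}, a)$, which tends to $G(\langle a\rangle_1^n)$ by the preliminary claim. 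The first and third terms on the right also tend to $G(\langle a\rangle_1^n)$ by the special-limit and Cauchy conditions, respectively. Passing to the limit gives $G(\langle a\rangle_1^{n-1}, fa) \le G(\langle a\rangle_1^n)$, and {\bf(sssd)} again forces $fa = a$. The main obstacle is the preliminary claim: {\bf(ptri)} only modifies one coordinate at a time, while the argument needs control of the ``maximally mixed'' value $G(\langle f^m x_0\rangle_1^{n-1}, a)$, and the descending induction on the number of $a$'s in the tuple is what makes this work.
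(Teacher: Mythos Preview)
Your proof is correct and follows essentially the same route as the paper. The paper packages your preliminary claim as Lemma~4.2(a) (proved via the inequality $G(\langle x\rangle_1^{n-1},y)\le (n-1)G(\langle y\rangle_1^{n-1},x)-(n-2)G(\langle y\rangle_1^n)$, itself obtained by iterating {\bf(ptri)}), and then isolates your Case~1 and Case~2 computations as Lemmas~5.1 and~5.2; your descending induction on the number of $a$'s is just an alternative unrolling of that same iterated-{\bf(ptri)} argument, and your index shift ($y=f^m x_0$ versus the paper's $y=f^{m+1}x_0$) in Case~1 is cosmetic.
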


\section{Topology}
Let $(X, G)$ be a partial $n$-metric space. We define an {\bf open ball} as:
$$B_\epsilon(x) := \{y\mid G(\langle x\rangle_1^{n-1}, y) - G(\langle x\rangle_1^n) < \epsilon\}$$
for $x\in X$ and $\epsilon\in\bR^{> 0}$. It is easy to see that these balls are nonempty for $\epsilon > 0$, and empty for $\epsilon \le 0$. We now show that these balls form a basis for a $T_0$ topology on $X$, called the {\em partial n-metric topology}, and is denoted by $\tau[G]$. Since the set of positive rational numbers $\bQ^{> 0}$ is dense in $\bR^{> 0}$, it then follows that every point $x\in X$ has a countable local base given by $\{B_q(x)\mid q\in\bQ^{> 0}\}$. Hence, $(X, G)$ is first countable as well.

\begin{lem}		\label{basislem}
The set $\{B_\epsilon(x)\mid x\in X, \epsilon\in\bR^{>0}\}$ forms a basis for a topology on $X$.
\end{lem}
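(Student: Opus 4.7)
The plan is to verify the two standard basis axioms: (i) every point is covered by some basis element, and (ii) whenever a point lies in the intersection of two basis elements, some basis element containing it sits inside that intersection.

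For (i), I would fix $x \in X$ and observe that $G(\langle x \rangle_1^{n-1}, x) - G(\langle x\rangle_1^n) = 0 < \epsilon$ for every $\epsilon > 0$, so $x \in B_\epsilon(x)$; this covers $X$.

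For (ii), suppose $y \in B_{\epsilon_1}(x_1) \cap B_{\epsilon_2}(x_2)$. Set
$$\delta_i := \epsilon_i - \bigl(G(\langle x_i\rangle_1^{n-1}, y) - G(\langle x_i\rangle_1^n)\bigr) > 0 \quad (i=1,2),$$
and let $\delta := \min(\delta_1, \delta_2) > 0$. I claim $B_\delta(y) \subseteq B_{\epsilon_1}(x_1) \cap B_{\epsilon_2}(x_2)$. Indeed, for any $z \in B_\delta(y)$ and $i \in \{1,2\}$, the $\mathbf{(ptri)}$ axiom (applied to the tuple $\langle x_i\rangle_1^{n-1}, z$ with intermediate point $y$) gives
$$G(\langle x_i\rangle_1^{n-1}, z) \le G(\langle x_i\rangle_1^{n-1}, y) + G(\langle y\rangle_1^{n-1}, z) - G(\langle y\rangle_1^n).$$
Subtracting $G(\langle x_i\rangle_1^n)$ from both sides and using $z \in B_\delta(y)$ together with the definition of $\delta_i$ yields
$$G(\langle x_i\rangle_1^{n-1}, z) - G(\langle x_i\rangle_1^n) \le (\epsilon_i - \delta_i) + \bigl(G(\langle y\rangle_1^{n-1}, z) - G(\langle y\rangle_1^n)\bigr) < (\epsilon_i - \delta_i) + \delta \le \epsilon_i,$$
so $z \in B_{\epsilon_i}(x_i)$, as desired.

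The only nontrivial ingredient is the partial triangle inequality $\mathbf{(ptri)}$, which was designed precisely so that the correction term $-G(\langle y\rangle_1^n)$ combines with the self-distance subtraction in the definition of $B_\epsilon(x)$ to produce a clean basis estimate; the potential obstacle would be if $\mathbf{(sym)}$ were needed to align the variable positions in $\mathbf{(ptri)}$, but since we apply $\mathbf{(ptri)}$ with $y$ in the natural position (the intermediate argument) and the single "free" slot occupied by $z$, no symmetry manipulation is required and the verification is direct.
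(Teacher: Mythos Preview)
Your proof is correct and follows essentially the same approach as the paper: both arguments define the radius $\delta$ around $y$ as the slack $\epsilon - (G(\langle x\rangle_1^{n-1}, y) - G(\langle x\rangle_1^n))$ and then apply \textbf{(ptri)} with intermediate point $y$ to show $B_\delta(y)\subseteq B_\epsilon(x)$. The only cosmetic difference is that the paper packages (i) and (ii) into the single sufficient condition ``for every $y\in B_\epsilon(x)$ there is $\delta>0$ with $B_\delta(y)\subseteq B_\epsilon(x)$'', whereas you verify the two basis axioms separately; the underlying computation is identical.
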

\begin{proof}
It suffices to show that for every ball $B_\epsilon(x)$ and every $y\in B_\epsilon(x)$, there is $\delta > 0$ such that $y \in B_\delta(y)\subseteq B_\epsilon(x)$. So let $y\in B_\epsilon(x)$. Then $G(\langle x\rangle_1^{n-1}, y) - G(\langle x\rangle_1^n) < \epsilon$. Set
$$\delta := \epsilon - G(\langle x\rangle_1^{n-1}, y) + G(\langle x\rangle_1^n).$$
Then $\delta > 0$, and hence $y\in B_\delta(y)$. Now let $z\in B_\delta(y)$, i.e., $G(\langle y\rangle_1^{n-1}, z) - G(\langle y\rangle_1^n) < \delta$. Then
\begin{eqnarray*}
G(\langle x\rangle_1^{n-1}, z) - G(\langle x\rangle_1^n) & \le & G(\langle x\rangle_1^{n-1}, y) + G(\langle y\rangle_1^{n-1}, z) - G(\langle y\rangle_1^n) - G(\langle x\rangle_1^n) \\
& < & G(\langle x\rangle_1^{n-1}, y) - G(\langle x\rangle_1^n) + \delta \\
& = & \epsilon.
\end{eqnarray*}
Thus, $z\in B_\epsilon(x)$, and hence $B_\delta(y)\subseteq B_\epsilon(x)$.
\end{proof}

\begin{lem}		\label{pnmetricT0top}
The partial $n$-metric topology $\tau[G]$ on a partial $n$-metric space $(X, G)$ is $T_0$. If $(X, G)$ is a strong partial $n$-metric space, then $\tau[G]$ is $T_1$.
\end{lem}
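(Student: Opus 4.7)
The plan is to exploit the separation axiom \textbf{(sep)} together with the small-self-distance axiom \textbf{(ssd)} to produce, for any two distinct points, an open ball that separates them in at least one direction (giving $T_0$); then upgrade to separation in both directions using \textbf{(sssd)} in the strong case (giving $T_1$).

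For the $T_0$ part, I would fix distinct $x, y \in X$. By \textbf{(sep)}, at least one of the equalities $G(\langle x\rangle_1^{n-1}, y) = G(\langle x\rangle_1^n)$ or $G(\langle y\rangle_1^{n-1}, x) = G(\langle y\rangle_1^n)$ must fail. Axiom \textbf{(ssd)} promotes each failed equality to a strict inequality in a definite direction: either $G(\langle x\rangle_1^n) < G(\langle x\rangle_1^{n-1}, y)$ or $G(\langle y\rangle_1^n) < G(\langle y\rangle_1^{n-1}, x)$. In the first case, set $\epsilon := G(\langle x\rangle_1^{n-1}, y) - G(\langle x\rangle_1^n) > 0$. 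Then $x \in B_\epsilon(x)$ trivially (the defining quantity is $0 < \epsilon$), while $y \notin B_\epsilon(x)$ since the defining quantity equals $\epsilon$, not less than $\epsilon$. The other case is handled symmetrically, producing a ball around $y$ that excludes $x$. Either way, we have an open set separating one point from the other, which is exactly the $T_0$ condition.

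For the $T_1$ part, assume $(X, G)$ is strong, so axiom \textbf{(sssd)} is in force. Given distinct $x, y \in X$, axiom \textbf{(sssd)} gives both strict inequalities $G(\langle x\rangle_1^n) < G(\langle x\rangle_1^{n-1}, y)$ and $G(\langle y\rangle_1^n) < G(\langle y\rangle_1^{n-1}, x)$ simultaneously. Repeating the construction of the previous paragraph in each direction yields $\epsilon_1, \epsilon_2 > 0$ with $x \in B_{\epsilon_1}(x)$, $y \notin B_{\epsilon_1}(x)$, and $y \in B_{\epsilon_2}(y)$, $x \notin B_{\epsilon_2}(y)$. This is the $T_1$ separation condition.

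The argument is essentially bookkeeping once the axioms are unpacked; the only mildly subtle point is noting that \textbf{(ssd)} fixes the direction of the strict inequality, so the natural ball centered at $x$ with radius $G(\langle x\rangle_1^{n-1}, y) - G(\langle x\rangle_1^n)$ is guaranteed to be a legitimate open set (positive radius) whenever the two points are not identified by the $x$-side of \textbf{(sep)}. No obstacle arises beyond this observation.
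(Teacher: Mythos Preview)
Your proof is correct and follows essentially the same approach as the paper: both set $\epsilon := G(\langle x\rangle_1^{n-1}, y) - G(\langle x\rangle_1^n)$ (and the symmetric quantity), use \textbf{(sep)} together with \textbf{(ssd)} to guarantee at least one of these is strictly positive, and observe that the corresponding ball excludes the other point; the strong case upgrades ``at least one'' to ``both'' via \textbf{(sssd)}. Your write-up is slightly more explicit about the role of \textbf{(ssd)} in fixing the sign, but otherwise the arguments are identical.
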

\begin{proof}
Let $x\not=y\in X$. Set $\epsilon_x := G(\langle x\rangle_1^{n-1}, y) - G(\langle x\rangle_1^n)$ and $\epsilon_y := G(\langle y\rangle_1^{n-1}, x) - G(\langle y\rangle_1^n)$. 

If $(X, G)$ is a partial $n$-metric space, then either $\epsilon_x > 0$ or $\epsilon_y > 0$ (by condition {\bf(sep)}). Consequently, either $y\not\in B_{\epsilon_x}(x)$ or $x\not\in B_{\epsilon_y}(y)$. Thus, $\tau[G]$ is $T_0$.

If $(X, G)$ is a strong partial $n$-metric space, then both $\epsilon_x  > 0$ and $\epsilon_y > 0$ (by condition {\bf(sssd)}). Consequently, $y\not\in B_{\epsilon_x}(x)$ and $x\not\in B_{\epsilon_y}(y)$. Thus, $\tau[G]$ is $T_1$.
\end{proof}

As promised in Section 2, we now show that the definition of a limit of a sequence (cf. Definition~\ref{cauchylimit}) agrees with the topological definition of a limit in the topology $\tau[G]$.
\begin{lem}		\label{equivlimitdefns}
Let $(X, G)$ be a partial $n$-metric space, $\langle x_m\rangle_{m\in\bN}$ be a sequence in $X$, and $a\in X$. Then
$$a \mbox{ is a topological limit of } \langle x_m\rangle_{m\in\bN} \mbox{ in } \tau[G] \iff \lim_{m\to\infty} G(\langle a\rangle_1^{n-1}, x_m) = G(\langle a\rangle_1^n).$$
\end{lem}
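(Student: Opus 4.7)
The plan is to unfold both definitions against the basis of balls $B_\epsilon(a) = \{y : G(\langle a\rangle_1^{n-1}, y) - G(\langle a\rangle_1^n) < \epsilon\}$ and observe that axiom \textbf{(ssd)} makes the two conditions literally the same up to choosing open neighborhoods. Specifically, \textbf{(ssd)} gives $G(\langle a\rangle_1^{n-1}, x_m) - G(\langle a\rangle_1^n) \ge 0$ for every $m$, so the numerical condition $\lim_m G(\langle a\rangle_1^{n-1}, x_m) = G(\langle a\rangle_1^n)$ is equivalent to saying that for each $\epsilon > 0$ there is $N$ with $0 \le G(\langle a\rangle_1^{n-1}, x_m) - G(\langle a\rangle_1^n) < \epsilon$ for all $m \ge N$, i.e., $x_m \in B_\epsilon(a)$ eventually.

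For the forward direction, I would start by noting that $a \in B_\epsilon(a)$ for every $\epsilon > 0$ (since the defining expression vanishes), so $B_\epsilon(a)$ is an open neighborhood of $a$ in $\tau[G]$. If $a$ is a topological limit of $\langle x_m\rangle$, then for each $\epsilon > 0$ there exists $N$ such that $x_m \in B_\epsilon(a)$ for all $m \ge N$. Combined with \textbf{(ssd)}, this directly yields $\lim_m G(\langle a\rangle_1^{n-1}, x_m) = G(\langle a\rangle_1^n)$.

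For the converse, let $U$ be an arbitrary open neighborhood of $a$ in $\tau[G]$. Using the proof of Lemma~\ref{basislem} (which produces a ball centered at any point inside any basic open containing it), I would extract some $\eta > 0$ with $B_\eta(a) \subseteq U$. The analytical hypothesis then supplies an $N$ such that $G(\langle a\rangle_1^{n-1}, x_m) - G(\langle a\rangle_1^n) < \eta$ for all $m \ge N$, hence $x_m \in B_\eta(a) \subseteq U$ eventually, showing that $a$ is a topological limit.

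There is essentially no obstacle here; the only subtle point is remembering to invoke Lemma~\ref{basislem} so that an arbitrary open neighborhood of $a$ can be shrunk to a ball \emph{centered at $a$}, rather than working only with balls centered elsewhere that happen to contain $a$. The rest is a direct translation between the $\epsilon$-ball language of $\tau[G]$ and the limit definition, with \textbf{(ssd)} doing the work of removing any absolute value concerns.
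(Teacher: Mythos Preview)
Your proof is correct and follows essentially the same approach as the paper: both unfold the definition of topological convergence against the balls $B_\epsilon(a)$, use \textbf{(ssd)} to handle nonnegativity, and rely (explicitly in your case, implicitly in the paper's) on Lemma~\ref{basislem} to know that $\{B_\epsilon(a)\}_{\epsilon>0}$ is a local base at $a$. The paper simply presents the argument as a terse chain of equivalences, whereas you spell out the two directions and the role of the local basis, but there is no substantive difference.
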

\begin{proof}
\begin{eqnarray*}
& & a \mbox{ is a topological limit of } \langle x_m\rangle_{m\in\bN} \mbox{ in } \tau[G]\\
& \iff & \forall\epsilon > 0 \;\exists m_0\in\bN \mbox{ such that } x_m\in B_\epsilon(a) \mbox{ for all } m\ge m_0 \\
& \iff & \forall\epsilon > 0 \;\exists m_0\in\bN \mbox{ such that } 0 \le G(\langle a\rangle_1^{n-1}, x_m) - G(\langle a\rangle_1^n) < \epsilon \mbox{ for all } m\ge m_0 \\
& \iff & \lim_{m\to\infty} G(\langle a\rangle_1^{n-1}, x_m) = G(\langle a\rangle_1^n).
\end{eqnarray*}
\end{proof}

We now show that a partial metric space $(X, p)$ gives rise to a partial $n$-metric.
\begin{example}
Let $(X, p)$ be a partial metric space. Define $G:X^n\to\bR$ as follows:
$$G(\langle x_i\rangle_1^n) = \sum_{i=1}^{n-1}\sum_{j = i+1}^{n} p(x_i, x_j).$$
In particular,
$$G(\langle x\rangle_1^n) = \frac{n(n-1)}{2} p(x, x) \;\;\;\;\mbox{and}\;\;\;\; G(\langle x\rangle_1^{n-1}, y) = (n-1)p(x, y) + \frac{(n-1)(n-2)}{2} p(x, x).$$
We leave it to the reader to verify that $(X, G)$ is indeed a partial $n$-metric space. And if $(X, p)$ is a strong partial metric space to start with, then $(X, G)$ turns out to be a strong partial $n$-metric space.
\end{example}

\subsection{Associated metric}
In a partial $n$-metric space $(X, G)$, one can define a metric $d_G:X\times X\to\bR^{\ge 0}$ on $X$ as follows:
$$d_G(x, y) := G(\langle x\rangle_1^{n-1}, y) - G(\langle x\rangle_1^n) + G(\langle y\rangle_1^{n-1}, x) - G(\langle y\rangle_1^n).$$
\begin{lem}		\label{partialnmetricdistance}
$(X, d_G)$ (with $d_G$ as defined above) is a metric space.
\end{lem}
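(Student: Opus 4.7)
The plan is to verify the four metric axioms in order: non-negativity, separation, symmetry, and the triangle inequality. I expect the first three to follow essentially mechanically from {\bf(ssd)}, {\bf(sep)}, and the manifest symmetry of the defining expression, while the triangle inequality will be the one step requiring a genuine calculation, obtained by specializing {\bf(ptri)} twice.

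For non-negativity, axiom {\bf(ssd)} gives $G(\langle x\rangle_1^{n-1}, y) - G(\langle x\rangle_1^n) \ge 0$ and $G(\langle y\rangle_1^{n-1}, x) - G(\langle y\rangle_1^n) \ge 0$, so $d_G(x,y)$ is a sum of two non-negative reals. If $d_G(x,y)=0$, both non-negative summands must vanish, which is exactly the left-hand side of the equivalence in {\bf(sep)} and therefore forces $x=y$; the converse $d_G(x,x) = 0$ is immediate from the definition. Symmetry is visible on the face of the definition, since the swap $x \leftrightarrow y$ merely exchanges the two bracketed differences in the defining sum.

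For the triangle inequality, I would specialize {\bf(ptri)} to the $n$-tuple $\langle x,x,\ldots,x,z\rangle$ with intermediate point $y$ to get
$$G(\langle x\rangle_1^{n-1}, z) \le G(\langle x\rangle_1^{n-1}, y) + G(\langle y\rangle_1^{n-1}, z) - G(\langle y\rangle_1^n),$$
and then apply the same specialization to the tuple $\langle z,z,\ldots,z,x\rangle$ with intermediate $y$ to obtain the analogous inequality with $x$ and $z$ interchanged. Adding the two inequalities and subtracting $G(\langle x\rangle_1^n) + G(\langle z\rangle_1^n)$ from both sides produces $d_G(x,z)$ on the left; on the right, the eight resulting terms regroup cleanly into the four-term sum $d_G(x,y) + d_G(y,z)$, since the two copies of $-G(\langle y\rangle_1^n)$ pair with $G(\langle y\rangle_1^{n-1}, x)$ and $G(\langle y\rangle_1^{n-1}, z)$ to form the $y$-contributions to $d_G(x,y)$ and $d_G(y,z)$ respectively, while the remaining terms $G(\langle x\rangle_1^{n-1}, y) - G(\langle x\rangle_1^n)$ and $G(\langle z\rangle_1^{n-1}, y) - G(\langle z\rangle_1^n)$ contribute the $x$- and $z$-halves.

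The main obstacle is thus purely clerical: checking that the eight terms from the two applications of {\bf(ptri)} really do partition into two copies of the four-term definition of $d_G$. Once that bookkeeping is in place, the lemma reduces entirely to axiomatic unpacking, and no further structure of the partial $n$-metric (such as the topology or the subtlety of negative self-distances) is needed.
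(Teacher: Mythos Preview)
Your proof is correct and follows essentially the same approach as the paper: non-negativity and separation from {\bf(ssd)} and {\bf(sep)}, symmetry by inspection, and the triangle inequality by applying {\bf(ptri)} twice (once for each asymmetric half of $d_G$) and regrouping. The only cosmetic difference is the labeling of the intermediate point in the triangle inequality.
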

\begin{proof}
Let $x, y\in X$. By condition {\bf(ssd)}, we have 
$$G(\langle x\rangle_1^{n-1}, y) - G(\langle x\rangle_1^n) \ge 0\;\;\;\;\;\wedge\;\;\;\;\; G(\langle y\rangle_1^{n-1}, x) - G(\langle y\rangle_1^n) \ge 0.$$
Consequently, $d_G(x, y) \ge 0$ for all $x, y\in X$. Moreover,
\begin{eqnarray*}
d_G(x, y) = 0 & \iff & G(\langle x\rangle_1^{n-1}, y) - G(\langle x\rangle_1^n) + G(\langle y\rangle_1^{n-1}, x) - G(\langle y\rangle_1^n) = 0 \\
& \iff & G(\langle x\rangle_1^{n-1}, y) - G(\langle x\rangle_1^n) = 0 \;\;\;\wedge\;\;\; G(\langle y\rangle_1^{n-1}, x) - G(\langle y\rangle_1^n) = 0 \\
& \iff & x = y \;\;\;\;\;\mbox{ (by condition {\bf(sep)}).}
\end{eqnarray*}
The condition of symmetry, i.e., $d_G(x, y) = d_G(y, x)$ for all $x, y\in X$, is obvious.\newline
Finally, by condition {\bf(ptri)} of $(X, G)$, we have for any $z\in X$
\begin{eqnarray*}
d_G(x, y) & = & G(\langle x\rangle_1^{n-1}, y) - G(\langle x\rangle_1^n) + G(\langle y\rangle_1^{n-1}, x) - G(\langle y\rangle_1^n) \\
& \le & G(\langle x\rangle_1^{n-1}, z) + G(\langle z\rangle_1^{n-1}, y) - G(\langle z\rangle_1^n) - G(\langle x\rangle_1^n) \\
& & +\; G(\langle y\rangle_1^{n-1}, z) + G(\langle z\rangle_1^{n-1}, x) - G(\langle z\rangle_1^n) - G(\langle y\rangle_1^n) \\
& = & d_G(x, z) + d_G(z, y).
\end{eqnarray*}
Thus, $d_G$ satisfies the triangle inequality. And hence, $(X, d_G)$ is a metric space.
\end{proof}

As promised in Section 2 again, we now show that the definition of a special limit of a Cauchy sequence (cf. Definition~\ref{cauchylimit}) agrees with the topological definition of a limit in the metric topology $\tau[d]$.
\begin{lem}		\label{equivspcllimitdefns}
Let $(X, G)$ be a partial $n$-metric space, $\langle x_m\rangle_{m\in\bN}$ be a Cauchy sequence in $(X, G)$, and $a\in X$. Then
\begin{eqnarray*}
& & a \mbox{ is a topological limit of } \langle x_m\rangle_{m\in\bN} \mbox{ in } \tau[d] \\
& \iff & \lim_{m_1, \ldots, m_n\to\infty} G(\langle x_{m_i}\rangle_{i=1}^n) = \lim_{m\to\infty} G(\langle a\rangle_1^{n-1}, x_m) = G(\langle a\rangle_1^n).
\end{eqnarray*}
In particular, $(X, G)$ is a complete partial $n$-metric space if and only if it is complete in the usual sense with respect to the metric topology $\tau[d]$.
\end{lem}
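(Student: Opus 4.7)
My plan is to expand the $\tau[d]$-limit condition into $d_G(a,x_m)\to 0$ and exploit the fact that, by (ssd), this metric splits as a sum of two non-negative pieces:
\[
d_G(a,x_m) = \bigl[G(\langle a\rangle_1^{n-1},x_m)-G(\langle a\rangle_1^n)\bigr] + \bigl[G(\langle x_m\rangle_1^{n-1},a)-G(\langle x_m\rangle_1^n)\bigr].
\]
Hence $d_G(a,x_m)\to 0$ if and only if both bracketed terms individually tend to $0$. I would then use (ptri) iteratively (with (sym) to permute coordinates into the ``last slot'') to convert tuples of $x_{m_i}$'s into tuples of $a$'s and vice versa, one coordinate at a time; each such conversion uses (ptri) with the target letter as the ``$y$'' and absorbs exactly one vanishing error term of the form $G(\langle a\rangle_1^{n-1},x_{m_j})-G(\langle a\rangle_1^n)$ or $G(\langle x_{m_j}\rangle_1^{n-1},a)-G(\langle x_{m_j}\rangle_1^n)$.

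For the forward direction, the vanishing of the first bracket already yields $\lim_m G(\langle a\rangle_1^{n-1},x_m)=G(\langle a\rangle_1^n)$. To pin down the Cauchy limit $r:=\lim_{m_i\to\infty} G(\langle x_{m_i}\rangle_{i=1}^n)$, I would iterate (ptri) with $y=a$ at every step to peel off one $x_{m_i}$ at a time, getting
\[
G(\langle x_{m_i}\rangle_{i=1}^n) \le G(\langle a\rangle_1^n) + \sum_{i=1}^n\bigl[G(\langle a\rangle_1^{n-1},x_{m_i})-G(\langle a\rangle_1^n)\bigr],
\]
so $r\le G(\langle a\rangle_1^n)$; and run the reverse iteration, applying (ptri) with $y=x_{m_j}$ at successive steps to replace each $a$ by an $x_{m_j}$, getting
\[
G(\langle a\rangle_1^n) \le G(\langle x_{m_i}\rangle_{i=1}^n) + \sum_{i=1}^n\bigl[G(\langle x_{m_i}\rangle_1^{n-1},a)-G(\langle x_{m_i}\rangle_1^n)\bigr],
\]
so $r\ge G(\langle a\rangle_1^n)$ (here the second bracket vanishing is what kills the error sum).

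For the backward direction, setting $m_1=\cdots=m_n=m$ in the Cauchy limit yields $\lim_m G(\langle x_m\rangle_1^n)=G(\langle a\rangle_1^n)$. The first special-limit equation directly gives the first bracket of $d_G(a,x_m)$ tending to $0$. For the second bracket, I would combine (ssd), which gives $\liminf_m G(\langle x_m\rangle_1^{n-1},a)\ge G(\langle a\rangle_1^n)$, with the one-sided iterated (ptri) bound
\[
G(\langle x_m\rangle_1^{n-1},a) \le G(\langle a\rangle_1^n) + (n-1)\bigl[G(\langle a\rangle_1^{n-1},x_m)-G(\langle a\rangle_1^n)\bigr],
\]
obtained by starting from the displayed left-hand side, using (sym) to move an $x_m$ to the last slot, and then invoking (ptri) with $y=a$ exactly $n-1$ times. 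Together these force $\lim_m G(\langle x_m\rangle_1^{n-1},a)=G(\langle a\rangle_1^n)$, so the second bracket vanishes and $d_G(a,x_m)\to 0$.

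The main obstacle is the bookkeeping for the iterated (ptri) argument: at each step one must use (sym) to place the coordinate to be replaced in the last position before invoking (ptri), and track the accumulating error terms carefully. Once that is set up, the cancellations are routine and the desired equalities of limits follow. The final ``in particular'' clause then follows by pairing the established equivalence with the fact that Cauchy sequences in $(X,G)$ are the same as Cauchy sequences in $(X,d_G)$ (Lemma~\ref{equivCauchy}), so that completeness in either sense amounts to the same statement.
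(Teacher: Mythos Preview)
Your proof is correct and follows essentially the same route as the paper: both split $d_G(a,x_m)$ into its two non-negative {\bf(ssd)}-summands and argue that each vanishes iff the special-limit conditions hold. The only difference is organizational --- the paper delegates the final equivalence to Lemma~\ref{equivspllimit}, whose proof rests on exactly the iterated-{\bf(ptri)} estimates you derive by hand (packaged there as Lemma~\ref{basicinequality}(b),(c),(d)).
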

\begin{proof}
\begin{eqnarray*}
& & a \mbox{ is a topological limit of } \langle x_m\rangle_{m\in\bN} \mbox{ in } \tau[d]\\
& \iff & \forall\epsilon > 0 \;\exists m_0\in\bN \mbox{ such that for all } m\ge m_0 \\
& & 0 \le G(\langle a\rangle_1^{n-1}, x_m) - G(\langle a\rangle_1^n) + G(\langle x_m\rangle_1^{n-1}, a) - G(\langle x_m\rangle_1^n) < \epsilon \\
& \iff & \lim_{m\to\infty} G(\langle a\rangle_1^{n-1}, x_m) = G(\langle a\rangle_1^n) \mbox{ and } \lim_{m\to\infty} (G(\langle x_m\rangle_1^{n-1}, a) - G(\langle x_m\rangle_1^n)) = 0 \\
& \iff & \lim_{m_1, \ldots, m_n\to\infty} G(\langle x_{m_i}\rangle_{i=1}^n) = G(\langle a\rangle_1^n) = \lim_{m\to\infty} G(\langle a\rangle_1^{n-1}, x_m).
\end{eqnarray*}
The last equivalence follows by Lemma~\ref{equivspllimit}.
\end{proof}

\section{Zoo of Limit Lemmas}
In this section, we prove a whole zoo of limit lemmas, and consequently show that the partial $n$-metric $G$ is continuous in all its variables. We start by proving the following basic inequalities which will be useful to us for proving these limit theorems.

\begin{lem}		\label{basicinequality}
Let $(X, G)$ be a partial $n$-metric space. Then for any $x_1, \ldots, x_n, y_1, \ldots, y_n,$ $x, y\in X$ and $1\le k\le n$, we have the following:
\begin{itemize}
\item[(a)] $G(\langle x_i\rangle_{i=1}^k, \langle z_i\rangle_{i=1}^{n-k}) \le G(\langle y_i\rangle_{i=1}^k, \langle z_i\rangle_{i=1}^{n-k}) + \sum_{j = 1}^{k} (G(\langle y_j\rangle_1^{n-1}, x_j) - G(\langle y_j\rangle_1^n))$.
\item[(b)] $G(\langle x_i\rangle_{i=1}^n) \le G(\langle y_i\rangle_{i=1}^n) + \sum_{j = 1}^{n} (G(\langle y_j\rangle_1^{n-1}, x_j) - G(\langle y_j\rangle_1^n))$.
\item[(c)] $G(\langle x\rangle_1^{n-1}, y) \le (n-1)\, G(\langle y\rangle_1^{n-1}, x) - (n-2)\, G(\langle y\rangle_1^n)$.
\item[(d)] $G(\langle x_i\rangle_{i=1}^n) \le \sum_{j=1}^n G(\langle y\rangle_1^{n-1}, x_j) - (n-1)G(\langle y\rangle_1^n)$.
\end{itemize}
\end{lem}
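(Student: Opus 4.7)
The plan is to prove the four inequalities in the stated order, with (a) carrying all the content and (b), (c), (d) following as easy specializations.

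For (a), I would induct on $k$. The base case $k=1$ is essentially the axiom (ptri) itself: using (sym) to move the single variable $x_1$ into the last slot, (ptri) gives
\[
G(x_1,z_1,\ldots,z_{n-1}) = G(z_1,\ldots,z_{n-1},x_1) \le G(z_1,\ldots,z_{n-1},y_1) + G(\langle y_1\rangle_1^{n-1},x_1) - G(\langle y_1\rangle_1^n),
\]
and (sym) again restores the first slot. For the inductive step, assume the inequality holds when $k$ variables are swapped; to swap one more, first apply the inductive hypothesis to replace $x_1,\ldots,x_k$ by $y_1,\ldots,y_k$ (treating $x_{k+1}$ as sitting in a fixed slot of the remaining block), and then apply the $k=1$ case once more to swap $x_{k+1}$ for $y_{k+1}$. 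Adding the two inequalities yields exactly the statement for $k+1$. This induction is the only non-routine step in the lemma; everything else is bookkeeping.

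Part (b) is simply the case $k = n$ of (a), with the block of $z_i$'s absent.

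For (c), I would apply (a) with $k = n-1$, taking $x_i = x$ for $i \le n-1$, $z_1 = y$, and $y_i = y$ for $i \le n-1$. The right-hand side becomes $G(\langle y\rangle_1^{n-1},y) + (n-1)\bigl(G(\langle y\rangle_1^{n-1},x) - G(\langle y\rangle_1^n)\bigr)$, and since $G(\langle y\rangle_1^{n-1},y) = G(\langle y\rangle_1^n)$ this collapses to $(n-1)G(\langle y\rangle_1^{n-1},x) - (n-2)G(\langle y\rangle_1^n)$, as required.

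For (d), I would apply (b) with $y_j = y$ for all $j$; the sum telescopes to $\sum_{j=1}^n G(\langle y\rangle_1^{n-1},x_j) - (n-1)G(\langle y\rangle_1^n)$ after combining the $n$ copies of $-G(\langle y\rangle_1^n)$ with the leading $+G(\langle y\rangle_1^n)$. The main (and really only) obstacle in the whole lemma is setting up the notation for the inductive step in (a) cleanly, since the slot into which we are swapping has to be tracked through a permutation by (sym); once that is done the arithmetic is automatic.
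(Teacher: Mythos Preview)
Your proposal is correct and takes essentially the same approach as the paper: (a) is proved by iterated application of \textbf{(ptri)} (the paper writes out the chain of inequalities explicitly rather than packaging it as a formal induction on $k$), and (b), (c), (d) are obtained as specializations. The only cosmetic difference is in (c), where the paper derives it from (b) via the substitution $x_1=\cdots=x_{n-1}=x$, $x_n=y$, $y_1=x$, $y_2=\cdots=y_n=y$ (the $j=1$ and $j=n$ summands vanish) rather than from (a) with $k=n-1$ as you do; the two routes give the same arithmetic.
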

\begin{proof}
(a) By repeated application of condition {\bf(ptri)}, we have that
\begin{eqnarray*}
G(\langle x_i\rangle_{i=1}^k, \langle z_i\rangle_{i=1}^{n-k}) & \le & G(\langle x_i\rangle_{i=1}^{k-1}, \langle z_i\rangle_{i=1}^{n-k}, y_k) + G(\langle y_k\rangle_1^{n-1}, x_k) - G(\langle y_k\rangle_1^n) \\
& \le & G(\langle x_i\rangle_{i=1}^{k-2}, \langle z_i\rangle_{i=1}^{n-k}, y_{k-1}, y_k) + \sum_{j = k-1}^k \Big(G(\langle y_j\rangle_1^{n-1}, x_j) - G(\langle y_j\rangle_1^n)\Big) \\
& \le & \cdots \\
& \le &  G(x_1, \langle z_i\rangle_{i=1}^{n-k}, \langle y_i\rangle_{i=2}^k) + \sum_{j = 2}^k \Big(G(\langle y_j\rangle_1^{n-1}, x_j) - G(\langle y_j\rangle_1^n)\Big)  \\
& \le & G(\langle y_i\rangle_{i=1}^k, \langle z_i\rangle_{i=1}^{n-k}) + \sum_{j = 1}^k \Big(G(\langle y_j\rangle_1^{n-1}, x_j) - G(\langle y_j\rangle_1^n)\Big).
\end{eqnarray*}
(b) This follows from (a) by taking $k = n$. \newline
(c) This follows from (b) by setting 
$$x_1 = \cdots = x_{n-1} = x,\;\;\;\;\; x_n = y,\;\;\;\;\; y_1 = x,\;\;\;\;\; y_2 = \cdots = y_n = y.$$
(d) This follows from (b) by setting $y_1 = \cdots = y_n = y$.
\end{proof}

\subsection{Properties of Limits.}
Now we list a set of properties of limits.
\begin{lem}		\label{limitprop}
Let $(X, G)$ be a partial $n$-metric space. Let $\langle x_m\rangle_{m\in\bN}$ be a sequence in $(X, G)$ with a limit $a\in X$. Let $b_1, \ldots, b_{n-1}\in X$. Then, provided the following limits exist, we have
\begin{itemize}
\item[(a)] $\lim_{m_1, \ldots, m_k\to\infty} G(\langle x_{m_i}\rangle_{i=1}^k, \langle b_i\rangle_{i=1}^{n-k}) \le G(\langle a\rangle_1^k, \langle b_i\rangle_{i=1}^{n-k})\;\;\;\;\;$ for all $\;1\le k\le n$.
\item[(b)] $\lim_{m_1, \ldots, m_n\to\infty} G(\langle x_{m_i}\rangle_{i=1}^n) \le G(\langle a\rangle_1^n)$.
\item[(c)] $\lim_{m_1, \ldots, m_k\to\infty} G(\langle x_{m_i}\rangle_{i=1}^k, \langle a\rangle_1^{n-k}) \le G(\langle a\rangle_1^n)\;\;\;\;\;$ for all $\;1\le k\le n$.
\item[(d)] $\lim_{m\to\infty} G(\langle x_m\rangle_1^{n-1}, a) \le G(\langle a\rangle_1^n)$.
\end{itemize}
\end{lem}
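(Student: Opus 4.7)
The key observation is that, unpacking the definition of ``$a$ is a limit of $\langle x_m\rangle_{m\in\bN}$'' from Definition~\ref{cauchylimit}, we have
\[
\lim_{m\to\infty}\bigl(G(\langle a\rangle_1^{n-1}, x_m) - G(\langle a\rangle_1^n)\bigr) = 0.
\]
This quantity is precisely the ``error term'' that appears on the right-hand side of Lemma~\ref{basicinequality}(a) when one sets all $y_j = a$. So the entire lemma is going to follow from Lemma~\ref{basicinequality} by plugging in $y_j = a$ and passing to the limit.

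For part (a), my plan is to apply Lemma~\ref{basicinequality}(a) with $y_1 = \cdots = y_k = a$ and the given $z_i = b_i$, which yields
\[
G(\langle x_{m_i}\rangle_{i=1}^k, \langle b_i\rangle_{i=1}^{n-k}) \le G(\langle a\rangle_1^k, \langle b_i\rangle_{i=1}^{n-k}) + \sum_{j=1}^k \bigl(G(\langle a\rangle_1^{n-1}, x_{m_j}) - G(\langle a\rangle_1^n)\bigr).
\]
Each summand on the right tends to $0$ as $m_j \to \infty$ independently, so taking $\lim_{m_1,\ldots,m_k \to \infty}$ of both sides (which exists by hypothesis) gives the desired inequality.

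Parts (b), (c), (d) are then immediate specializations. For (b), I take $k=n$ in (a) (noting that the tuple of $b_i$'s is empty), or equivalently apply Lemma~\ref{basicinequality}(b) directly with $y_j = a$. For (c), I set $b_1 = \cdots = b_{n-k} = a$ in (a), so that $G(\langle a\rangle_1^k, \langle b_i\rangle_{i=1}^{n-k}) = G(\langle a\rangle_1^n)$. For (d), I apply Lemma~\ref{basicinequality}(a) with $k = n-1$, $y_1 = \cdots = y_{n-1} = a$, $z_1 = a$, and $x_1 = \cdots = x_{n-1} = x_m$ (using a single index), obtaining
\[
G(\langle x_m\rangle_1^{n-1}, a) \le G(\langle a\rangle_1^n) + (n-1)\bigl(G(\langle a\rangle_1^{n-1}, x_m) - G(\langle a\rangle_1^n)\bigr),
\]
and letting $m \to \infty$ kills the second term.

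I do not anticipate a real obstacle here. The only subtlety is being careful about the multi-index limits: in (a), (b), (c) the indices $m_1,\ldots,m_k$ are sent to infinity independently, and the proof works because Lemma~\ref{basicinequality}(a) produces an error term that is a \emph{sum} of single-index expressions, each of which tends to $0$. The hypothesis ``provided the following limits exist'' ensures we do not need to worry about existence of the limits on the left; we only need the one-sided inequality, for which it suffices that each individual error term has limit $0$.
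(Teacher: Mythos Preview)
Your proposal is correct and follows essentially the same approach as the paper: apply Lemma~\ref{basicinequality}(a) with $y_1=\cdots=y_k=a$, observe that the error term is a sum of single-index quantities each tending to $0$ by the definition of limit, and then specialize to obtain (b), (c), (d). The only cosmetic difference is that the paper derives (d) from (c) by setting $k=n-1$ and $m_1=\cdots=m_{n-1}=m$, whereas you re-apply the basic inequality directly; both are fine.
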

\begin{proof}
(a) By Lemma~\ref{basicinequality}(a), we have for all $m_1, \ldots, m_k\in\bN$
$$G(\langle x_{m_i}\rangle_{i=1}^k, \langle b_i\rangle_{i=1}^{n-k}) \le G(\langle a\rangle_1^k, \langle b_i\rangle_{i=1}^{n-k}) + \sum_{j = 1}^{k} (G(\langle a\rangle_1^{n-1}, x_{m_j}) - G(\langle a\rangle_1^n)).$$
Taking the limit as $m_1, \ldots, m_k\to\infty$ and using Definition~\ref{cauchylimit}, we get that
\begin{eqnarray*}
& & \lim_{m_1, \ldots, m_k\to\infty} G(\langle x_{m_i}\rangle_{i=1}^k, \langle b_i\rangle_{i=1}^{n-k}) \\
& \le & G(\langle a\rangle_1^k, \langle b_i\rangle_{i=1}^{n-k}) + \sum_{j = 1}^{k} (\lim_{m_j\to\infty}G(\langle a\rangle_1^{n-1}, x_{m_j}) - G(\langle a\rangle_1^n)) \\
& = & G(\langle a\rangle_1^k, \langle b_i\rangle_{i=1}^{n-k}).
\end{eqnarray*}

(b) This inequality follows from (a) by taking $k = n$.

(c) This inequality follows from (a) by setting $b_1 = \cdots = b_{n - k} = a$, for any $1\le k < n$.

(d) This inequality follows from (c) by setting $k = n-1$ and $m_1 = \cdots = m_{n-1} = m$.
\end{proof}

\subsection{Properties of Special Limits.}
In this subsection, we show that if we restrict ourselves to Cauchy sequences and special limits of such sequences, then all the limits mentioned in the previous subsection exist and all the inequalities become equalities.
\begin{lem}		\label{speciallimitprop}
Let $(X, G)$ be a partial $n$-metric space and $\langle x_m\rangle_{m\in\bN}$ be a Cauchy sequence in $(X, G)$ with a special limit $a\in X$. Let $b_1, \ldots, b_{n-1}\in X$. Then 
\begin{itemize}
\item[(a)] $\lim_{m\to\infty} G(\langle x_m\rangle_1^{n-1}, a) = G(\langle a\rangle_1^n)$.
\item[(b)] $\lim_{m_1, \ldots, m_k\to\infty} G(\langle x_{m_i}\rangle_{i=1}^k, \langle b_i\rangle_{i=1}^{n-k}) = G(\langle a\rangle_1^k, \langle b_i\rangle_{i=1}^{n-k})\;\;\;\;\;$ for all $\;1\le k\le n$.
\item[(c)] $\lim_{m_1, \ldots, m_n\to\infty} G(\langle x_{m_i}\rangle_{i=1}^n) = G(\langle a\rangle_1^n)$.
\item[(d)] $\lim_{m_1, \ldots, m_k\to\infty} G(\langle x_{m_i}\rangle_{i=1}^k, \langle a\rangle_1^{n-k}) = G(\langle a\rangle_1^n)\;\;\;\;\;$ for all $\;1\le k\le n$.
\end{itemize}
\end{lem}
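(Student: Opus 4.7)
The plan is to handle the four parts in the order (c), (a), (b), (d). Part (c) is literally the definition of ``special limit'', so it comes for free; specializing $m_1 = \cdots = m_n = m$ in it gives the scalar consequence $\lim_m G(\langle x_m\rangle_1^n) = G(\langle a\rangle_1^n)$. Together with the other half of the special-limit hypothesis, $\lim_m G(\langle a\rangle_1^{n-1}, x_m) = G(\langle a\rangle_1^n)$, this provides the two input limits that drive everything else.

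For (a), I would squeeze from both sides. Condition \textbf{(ssd)} gives $G(\langle x_m\rangle_1^n)\le G(\langle x_m\rangle_1^{n-1}, a)$ for each $m$, so $\liminf_m G(\langle x_m\rangle_1^{n-1}, a)\ge G(\langle a\rangle_1^n)$ by the diagonal observation above. For the reverse bound I would invoke Lemma~\ref{basicinequality}(c) with $x = x_m$ and $y = a$, which yields $G(\langle x_m\rangle_1^{n-1}, a) \le (n-1)\,G(\langle a\rangle_1^{n-1}, x_m) - (n-2)\,G(\langle a\rangle_1^n)$; the right-hand side tends to $(n-1)G(\langle a\rangle_1^n) - (n-2)G(\langle a\rangle_1^n) = G(\langle a\rangle_1^n)$. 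The squeeze forces both the existence and the value of the limit.

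The same squeezing strategy handles (b), with both bounds now coming directly from Lemma~\ref{basicinequality}(a). Substituting $(x_i, z_i, y_j) = (x_{m_i}, b_i, a)$ there gives the $\limsup$ bound, since the correction term $\sum_{j=1}^k (G(\langle a\rangle_1^{n-1}, x_{m_j}) - G(\langle a\rangle_1^n))$ tends to $0$ by the special-limit hypothesis. Swapping the roles via $(x_i, z_i, y_j) = (a, b_i, x_{m_j})$ gives the $\liminf$ bound, since the correction term $\sum_{j=1}^k (G(\langle x_{m_j}\rangle_1^{n-1}, a) - G(\langle x_{m_j}\rangle_1^n))$ now tends to $0$ by part (a) together with $\lim_m G(\langle x_m\rangle_1^n) = G(\langle a\rangle_1^n)$. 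Part (d) is just (b) specialized to $b_1 = \cdots = b_{n-k} = a$.

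The main obstacle is this ``reverse'' (lower) direction: Lemma~\ref{limitprop} only supplies $\le$ inequalities, so the real new content is that part (a) upgrades the one-sided bound $\limsup_m G(\langle x_m\rangle_1^{n-1}, a)\le G(\langle a\rangle_1^n)$ to a genuine limit, which is exactly what is needed to make the correction terms in the swapped application of Lemma~\ref{basicinequality}(a) vanish. Once (a) is in hand, the rest is bookkeeping.
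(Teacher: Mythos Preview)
Your argument is correct and follows essentially the same squeezing strategy as the paper, using Lemma~\ref{basicinequality} in both directions for (a) and (b) and then specializing for (c) and (d). Two minor streamlinings worth noting: you observe that (c) is literally the definition of special limit (the paper instead re-derives it from (b)), and for the lower bound in (a) you use \textbf{(ssd)} directly rather than the paper's swapped instance of Lemma~\ref{basicinequality}(c) --- both shortcuts are valid and slightly cleaner.
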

\begin{proof}
Since $\langle x_m\rangle_{m\in\bN}$ is a Cauchy sequence with special limit $a$, we have
$$\lim_{m_1, \ldots, m_n\to\infty} G(\langle x_{m_i}\rangle_{i=1}^n) = \lim_{m\to\infty} G(\langle a\rangle_1^{n-1}, x_m) = G(\langle a\rangle_1^n).$$

(a) Fix $\epsilon > 0$. By Lemma~\ref{basicinequality}(c), we have for any $m\in\bN$
$$G(\langle x_m\rangle_1^{n-1}, a) \le (n - 1)\,G(\langle a\rangle_1^{n-1}, x_m) - (n - 2)\,G(\langle a\rangle_1^n).$$
Set $\epsilon' := \frac{\epsilon}{n-1}$, and choose $M_1$ large enough such that $G(\langle a\rangle_1^{n-1}, x_m)\le G(\langle a\rangle_1^n) + \epsilon'$ for all $m\ge M_1$. Thus, for all $m\ge M_1$, we have
$$G(\langle x_m\rangle_1^{n-1}, a) \le (n-1)\, (G(\langle a\rangle_1^n) + \epsilon') - (n - 2)\,G(\langle a\rangle_1^n) = G(\langle a\rangle_1^n) + (n-1)\,\epsilon' = G(\langle a\rangle_1^n) + \epsilon.$$

Conversely, by Lemma~\ref{basicinequality}(c) again, we have for any $m\in\bN$
$$G(\langle a\rangle_1^{n-1}, x_m) \le (n - 1)\,G(\langle x_m\rangle_1^{n-1}, a) - (n - 2)\,G(\langle x_m\rangle_1^n),$$
that is,
$$G(\langle a\rangle_1^{n-1}, x_m) +  (n - 2)\,G(\langle x_m\rangle_1^n) \le (n - 1)\,G(\langle x_m\rangle_1^{n-1}, a).$$
Set $\epsilon'' := \epsilon$, and choose $M_2$ large enough such that $G(\langle a\rangle_1^{n-1}, x_m)\ge G(\langle a\rangle_1^n) - \epsilon''$ and $G(\langle x_m\rangle_1^n)\ge G(\langle a\rangle_1^n) - \epsilon''$ for all $m\ge M_2$. Thus, for all $m\ge M_2$, we have
\begin{eqnarray*}
(n-1)\,G(\langle x_m\rangle_1^{n-1}, a) & \ge & G(\langle a\rangle_1^{n-1}, x_m) +  (n - 2)\,G(\langle x_m\rangle_1^n) \\
& \ge & (G(\langle a\rangle_1^n) - \epsilon'') + (n - 2)\,(G(\langle a\rangle_1^n) - \epsilon'') \\
& = & (n-1)\,(G(\langle a\rangle_1^n) - \epsilon'') \\
\implies G(\langle x_m\rangle_1^{n-1}, a) & \ge & G(\langle a\rangle_1^n) - \epsilon'' = G(\langle a\rangle_1^n) - \epsilon.
\end{eqnarray*}
Setting $M := \max\{M_1, M_2\}$, we obtain that for all $m\ge M$
$$0\le |G(\langle x_m\rangle_1^{n-1}, a) - G(\langle a\rangle_1^n)| \le \epsilon.$$
Since $0 < \epsilon$ is arbitrary, it follows that $\lim_{m\to\infty} G(\langle x_m\rangle_1^{n-1}, a) = G(\langle a\rangle_1^n)$.\newline

(b) Fix $\epsilon > 0$. By Lemma~\ref{basicinequality}(a), we have for all $m_1, \ldots, m_n\in\bN$
$$G(\langle a\rangle_1^k, \langle b_i\rangle_{i=1}^{n-k}) \le G(\langle x_{m_i}\rangle_{i=1}^k, \langle b_i\rangle_{i=1}^{n-k}) + \sum_{j = 1}^{k} (G(\langle x_{m_j}\rangle_1^{n-1}, a) - G(\langle x_{m_j}\rangle_1^n)).$$
Set $\epsilon' := \frac{\epsilon}{2n}$. Using part (a), choose $M_1$ large enough such that $G(\langle x_m\rangle_1^{n-1}, a)\le G(\langle a\rangle_1^n) + \epsilon'$ and $G(\langle x_m\rangle_1^n)\ge G(\langle a\rangle_1^n) - \epsilon'$ for all $m\ge M_1$. Thus, for all $m_1, \ldots, m_n\ge M_1$, we have
\begin{eqnarray*}
G(\langle a\rangle_1^k, \langle b_i\rangle_{i=1}^{n-k}) & \le & G(\langle x_{m_i}\rangle_{i=1}^k, \langle b_i\rangle_{i=1}^{n-k}) + \sum_{j = 1}^{k} \Big[(G(\langle a\rangle_1^n) + \epsilon') - (G(\langle a\rangle_1^n) - \epsilon')\Big] \\
& = & G(\langle x_{m_i}\rangle_{i=1}^k, \langle b_i\rangle_{i=1}^{n-k}) + 2k\epsilon' \\
& \le & G(\langle x_{m_i}\rangle_{i=1}^k, \langle b_i\rangle_{i=1}^{n-k}) + 2n\epsilon' \\
& = & G(\langle x_{m_i}\rangle_{i=1}^k, \langle b_i\rangle_{i=1}^{n-k}) + \epsilon \\
\implies G(\langle a\rangle_1^k, \langle b_i\rangle_{i=1}^{n-k}) - \epsilon & \le &  G(\langle x_{m_i}\rangle_{i=1}^k, \langle b_i\rangle_{i=1}^{n-k}).
\end{eqnarray*}

Conversely, by Lemma~\ref{basicinequality}(a) again, we have for all $m_1, \ldots, m_k\in\bN$
$$G(\langle x_{m_i}\rangle_{i=1}^k, \langle b_i\rangle_{i=1}^{n-k}) \le G(\langle a\rangle_1^k, \langle b_i\rangle_{i=1}^{n-k}) + \sum_{j = 1}^{k} (G(\langle a\rangle_1^{n-1}, x_{m_j}) - G(\langle a\rangle_1^n)).$$
Set $\epsilon'' := \frac{\epsilon}{n}$, and choose $M_2$ large enough such that $G(\langle a\rangle_1^{n-1}, x_m)\le G(\langle a\rangle_1^n) + \epsilon''$ for all $m\ge M_2$. Then for all $m_1, \ldots, m_k\ge M_2$, we have
\begin{eqnarray*}
G(\langle x_{m_i}\rangle_{i=1}^k, \langle b_i\rangle_{i=1}^{n-k}) & \le & G(\langle a\rangle_1^k, \langle b_i\rangle_{i=1}^{n-k}) + \sum_{j = 1}^{k} \Big[(G(\langle a\rangle_1^n) + \epsilon'') - G(\langle a\rangle_1^n)\Big] \\
& = & G(\langle a\rangle_1^k, \langle b_i\rangle_{i=1}^{n-k}) + k\epsilon'' \\
& \le & G(\langle a\rangle_1^k, \langle b_i\rangle_{i=1}^{n-k}) + n\epsilon'' \\
& = & G(\langle a\rangle_1^k, \langle b_i\rangle_{i=1}^{n-k}) + \epsilon.
\end{eqnarray*}
Setting $M := \max\{M_1, M_2\}$, we obtain that for all $m_1, \ldots, m_k\ge M$
$$0\le |G(\langle x_{m_i}\rangle_{i=1}^k, \langle b_i\rangle_{i=1}^{n-k}) - G(\langle a\rangle_1^k, \langle b_i\rangle_{i=1}^{n-k})| \le \epsilon.$$
Since $0 < \epsilon$ is arbitrary, it follows that $\lim_{m\to\infty} G(\langle x_{m_i}\rangle_{i=1}^k, \langle b_i\rangle_{i=1}^{n-k})  = G(\langle a\rangle_1^k, \langle b_i\rangle_{i=1}^{n-k})$.\newline

(c) This equality follows from (b) by taking $k = n$.

(d) This equality follows from (b) by setting $b_1 = \cdots = b_{n - k} = a$, for any $1\le k < n$.
\end{proof}

Finally, we get the uniqueness of special limits as a corollary.
\begin{lem}		\label{spllimitunique}
For each partial $n$-metric space $(X, G)$ and each Cauchy sequence $\langle x_m\rangle_{m\in\bN}$ in $(X, G)$, there is at most one special limit of $\langle x_m\rangle_{m\in\bN}$ in $X$.
\end{lem}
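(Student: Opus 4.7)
The plan is to suppose that a Cauchy sequence $\langle x_m\rangle_{m\in\bN}$ has two special limits $a, b \in X$ and use the separation axiom {\bf(sep)} to conclude $a = b$. The key observation is that the ``special limit'' condition is strong enough to let us apply Lemma~\ref{speciallimitprop}(b) at one of the limits to extract convergence information involving the other limit.

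First, from the definition of a special limit applied to both $a$ and $b$, one immediately reads off that
\[G(\langle a\rangle_1^n) \;=\; \lim_{m_1, \ldots, m_n\to\infty}G(x_{m_1}, \ldots, x_{m_n}) \;=\; G(\langle b\rangle_1^n).\]
Next, I would apply Lemma~\ref{speciallimitprop}(b) to the Cauchy sequence $\langle x_m\rangle_{m\in\bN}$ with special limit $a$, taking $k = 1$ and $b_1 = \cdots = b_{n-1} = b$. This gives
\[\lim_{m\to\infty} G(x_m, \langle b\rangle_1^{n-1}) \;=\; G(a, \langle b\rangle_1^{n-1}).\]
Invoking symmetry {\bf(sym)} on both sides rewrites this as $\lim_{m\to\infty} G(\langle b\rangle_1^{n-1}, x_m) = G(\langle b\rangle_1^{n-1}, a)$. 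On the other hand, since $b$ is (in particular) a limit of $\langle x_m\rangle$, we have $\lim_{m\to\infty} G(\langle b\rangle_1^{n-1}, x_m) = G(\langle b\rangle_1^n)$. Combining these yields $G(\langle b\rangle_1^{n-1}, a) = G(\langle b\rangle_1^n)$. Swapping the roles of $a$ and $b$ in the same argument gives $G(\langle a\rangle_1^{n-1}, b) = G(\langle a\rangle_1^n)$.

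With both equalities in hand, the separation axiom {\bf(sep)} forces $a = b$, establishing uniqueness. There is no real obstacle here; the proof is essentially an exercise in unpacking the special-limit definition and matching it against {\bf(sep)}, with the only nontrivial ingredient being Lemma~\ref{speciallimitprop}(b), which converts convergence of the ``diagonal'' $n$-tuple into convergence of mixed $n$-tuples involving the other candidate limit.
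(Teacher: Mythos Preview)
Your proof is correct and follows essentially the same route as the paper: obtain $G(\langle a\rangle_1^{n-1}, b) = G(\langle a\rangle_1^n)$ and $G(\langle b\rangle_1^{n-1}, a) = G(\langle b\rangle_1^n)$ via Lemma~\ref{speciallimitprop} applied at each special limit, then invoke {\bf(sep)}. The paper's version is slightly terser (it does not bother recording $G(\langle a\rangle_1^n) = G(\langle b\rangle_1^n)$, which you observe but do not actually use), and it cites part~(a) of Lemma~\ref{speciallimitprop} where you, more accurately, cite part~(b) with $k=1$; substantively the arguments are the same.
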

\begin{proof}
Let $a$ and $b$ be two special limits of $\langle x_m\rangle_{m\in\bN}$ in $X$. By Lemma~\ref{speciallimitprop}(a), we have
\begin{eqnarray*}
& & G(\langle a\rangle_1^n) = \lim_{m\to\infty} G(\langle a\rangle_1^{n-1}, x_m) = G(\langle a\rangle_1^{n-1}, b) \\
& & G(\langle b\rangle_1^n) = \lim_{m\to\infty} G(\langle b\rangle_1^{n-1}, x_m) = G(\langle b\rangle_1^{n-1}, a).
\end{eqnarray*}
By condition {\bf(sep)}, it then follows that $a = b$.
\end{proof}

To end this section, we give equivalent definitions of a Cauchy sequence and a special limit of a Cauchy sequence and the equivalence of {\bf(sep)} and {\bf(sep')} as promised in Section 2. An equivalent condition for being a Cauchy sequence is the following.
\begin{lem}		\label{equivCauchy}
Let $\langle x_m\rangle_{m\in\bN}$ be a sequence in a partial $n$-metric space $(X, G)$ and $r\in\bR$. Then
$$\lim_{m_1, \ldots, m_n\to\infty} G(\langle x_{m_i}\rangle_{i = 1}^n) = r \iff \lim_{m_1, m_2\to\infty} G(\langle x_{m_1}\rangle_1^{n-1}, x_{m_2}) = r.$$
\end{lem}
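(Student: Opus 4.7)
The plan is to prove the two directions separately. The forward direction ($\Rightarrow$) is essentially immediate: if the $n$-variable limit equals $r$, then for each $\epsilon > 0$ there is an $M$ such that $|G(\langle x_{m_i}\rangle_{i=1}^n) - r| < \epsilon$ whenever all indices are at least $M$. Specializing to $m_1 = m_2 = \cdots = m_{n-1}$ (keeping $m_n$ independent) recovers exactly the quantity $G(\langle x_{m_1}\rangle_1^{n-1}, x_{m_n})$, so the two-variable limit equals $r$.

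For the reverse direction ($\Leftarrow$), assume $\lim_{m_1,m_2\to\infty} G(\langle x_{m_1}\rangle_1^{n-1}, x_{m_2}) = r$; in particular, taking $m_1 = m_2$, we also get $\lim_{m\to\infty} G(\langle x_m\rangle_1^n) = r$. The strategy is to sandwich $G(\langle x_{m_i}\rangle_{i=1}^n)$ between two expressions that each tend to $r$, using an auxiliary base point $y = x_{m_0}$ with $m_0$ large. For the upper bound, apply Lemma~\ref{basicinequality}(d) with $y = x_{m_0}$ to obtain
$$G(\langle x_{m_i}\rangle_{i=1}^n) \;\le\; \sum_{j=1}^n G(\langle x_{m_0}\rangle_1^{n-1}, x_{m_j}) - (n-1)\,G(\langle x_{m_0}\rangle_1^n),$$
whose right-hand side tends to $nr - (n-1)r = r$ as $m_0, m_1, \ldots, m_n\to\infty$.

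For the lower bound, apply Lemma~\ref{basicinequality}(b) with the roles of the two $n$-tuples reversed: take the first tuple to be constantly $x_{m_0}$ and the second tuple to be $\langle x_{m_i}\rangle_{i=1}^n$. This yields
$$G(\langle x_{m_0}\rangle_1^n) \;\le\; G(\langle x_{m_i}\rangle_{i=1}^n) + \sum_{j=1}^n \bigl(G(\langle x_{m_j}\rangle_1^{n-1}, x_{m_0}) - G(\langle x_{m_j}\rangle_1^n)\bigr),$$
which rearranges to
$$G(\langle x_{m_i}\rangle_{i=1}^n) \;\ge\; G(\langle x_{m_0}\rangle_1^n) + \sum_{j=1}^n \bigl(G(\langle x_{m_j}\rangle_1^n) - G(\langle x_{m_j}\rangle_1^{n-1}, x_{m_0})\bigr),$$
whose right-hand side also tends to $r + n(r - r) = r$.

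To finish, given $\epsilon > 0$, choose $M$ from the two-variable hypothesis so that $|G(\langle x_m\rangle_1^{n-1}, x_{m'}) - r| < \epsilon/(2n+1)$ (and hence $|G(\langle x_m\rangle_1^n) - r| < \epsilon/(2n+1)$) for all $m, m' \ge M$; fix any $m_0 \ge M$ and plug into both bounds to get $|G(\langle x_{m_i}\rangle_{i=1}^n) - r| < \epsilon$ for all $m_1,\ldots,m_n \ge M$. I do not anticipate a serious obstacle; the only delicate point is to correctly invert the roles of the two $n$-tuples in Lemma~\ref{basicinequality}(b) so that the ``correction'' terms are precisely the two-variable quantities $G(\langle x_{m_j}\rangle_1^{n-1}, x_{m_0})$ and the self-distances $G(\langle x_{m_j}\rangle_1^n)$ controlled by the hypothesis.
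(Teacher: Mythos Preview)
Your argument is correct and follows essentially the same route as the paper's proof: the forward direction by specializing indices, and the reverse direction by sandwiching $G(\langle x_{m_i}\rangle_{i=1}^n)$ between expressions obtained from Lemma~\ref{basicinequality}(d) (upper bound) and Lemma~\ref{basicinequality}(b) with the tuples reversed (lower bound). The only cosmetic difference is that you introduce a separate auxiliary pivot index $m_0$, whereas the paper simply reuses $m_1$ as the pivot; your choice of a single tolerance $\epsilon/(2n+1)$ also works for both bounds, while the paper splits into $\epsilon/(2n-1)$ and $\epsilon/(2n+1)$ with separate thresholds $M_1$, $M_2$.
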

\begin{proof}
The left to right direction is trivial. So we prove the right to left direction. Assume $\lim_{m_1, m_2\to\infty} G(\langle x_{m_1}\rangle_1^{n-1}, x_{m_2}) = r$. Fix $\epsilon > 0$. Set $\epsilon' := \frac{\epsilon}{2n-1}$ and choose $M_1\in\bN$ large enough such that for all $m_1, m_2\ge M_1$, we have
$$r - \epsilon' < G(\langle x_{m_1}\rangle_1^{n-1}, x_{m_2}) < r + \epsilon'.$$  
By Lemma~\ref{basicinequality}(d), we then have for all $m_1, \ldots, m_n\ge M_1$
\begin{eqnarray*}
G(\langle x_{m_i}\rangle_{i=1}^n) & \le & \sum_{t=1}^n G(\langle x_{m_1}\rangle_1^{n-1}, x_{m_t}) - (n - 1)\,G(\langle x_{m_1}\rangle_1^n) \\
& < & \sum_{t=1}^n (r + \epsilon') - (n - 1)\,(r - \epsilon') \\
& = & r + (2n - 1)\epsilon' \\
& = & r + \epsilon.
\end{eqnarray*}
Conversely, set $\epsilon'' := \frac{\epsilon}{2n+1}$, and choose $M_2\in\bN$ large enough such that for all $m_1, m_2\ge M_2$ 
$$r - \epsilon'' < G(\langle x_{m_1}\rangle_1^{n-1}, x_{m_2}) < r + \epsilon''.$$
By Lemma~\ref{basicinequality}(a), we then have for all $m_1, \ldots, m_n\ge M_2$
\begin{eqnarray*}
G(\langle x_{m_1}\rangle_1^n) & \le & G(\langle x_{m_i}\rangle_{i=1}^n) + \sum_{t=1}^n \Big[G(\langle x_{m_t}\rangle_1^{n-1}, x_{m_1}) - G(\langle x_{m_t}\rangle_1^n)\Big] \\
\implies G(\langle x_{m_i}\rangle_{i=1}^n) & \ge & G(\langle x_{m_1}\rangle_1^n) + \sum_{t=1}^n G(\langle x_{m_t}\rangle_1^n) - \sum_{t=1}^n G(\langle x_{m_t}\rangle_1^{n-1}, x_{m_1}) \\
& > & (r - \epsilon'') + \sum_{t = 1}^n (r - \epsilon'') - \sum_{t = 1}^n (r + \epsilon'') \\
& = & r - (2n + 1)\epsilon'' \\
& = & r - \epsilon.
\end{eqnarray*}
Setting $M := \max\{M_1, M_2\}$, we obtain that for all $m_1, \ldots, m_n\ge M$
$$r - \epsilon < G(\langle x_{m_i}\rangle_{i=1}^n) < r + \epsilon.$$
Since $0 < \epsilon$ is arbitrary, we obtain $\lim_{m_1, \ldots, m_n\to\infty} G(\langle x_{m_i}\rangle_{i=1}^n) = r$.
\end{proof}

An equivalent condition for being a special limit of a Cauchy sequence is the following.
\begin{lem}		\label{equivspllimit}
An element $a\in X$ is a special limit of the Cauchy sequence $\langle x_m\rangle_{m\in\bN}$ if and only if $\lim_{m\to\infty} G(\langle a\rangle_1^{n-1}, x_m) = G(\langle a\rangle_1^n) \mbox{ and } \lim_{m\to\infty} (G(\langle x_m\rangle_1^{n-1}, a) - G(\langle x_m\rangle_1^n)) = 0$.
\end{lem}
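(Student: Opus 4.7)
The plan is to prove this biconditional by unpacking the definition of a special limit (namely $\lim_{m_1,\ldots,m_n\to\infty}G(\langle x_{m_i}\rangle_{i=1}^n)=\lim_{m\to\infty}G(\langle a\rangle_1^{n-1},x_m)=G(\langle a\rangle_1^n)$) and comparing it with the stated two-condition criterion. The first equation $\lim_{m\to\infty}G(\langle a\rangle_1^{n-1},x_m)=G(\langle a\rangle_1^n)$ appears verbatim on both sides, so the real content of the lemma is to show that, given the Cauchy hypothesis, the asymptotic equality $\lim_{m_1,\ldots,m_n\to\infty}G(\langle x_{m_i}\rangle_{i=1}^n) = G(\langle a\rangle_1^n)$ is equivalent to $\lim_{m\to\infty}\bigl(G(\langle x_m\rangle_1^{n-1},a) - G(\langle x_m\rangle_1^n)\bigr) = 0$.

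For the forward direction I would assume $a$ is a special limit. Then Lemma~\ref{speciallimitprop}(a) immediately gives $\lim_{m\to\infty}G(\langle x_m\rangle_1^{n-1},a)=G(\langle a\rangle_1^n)$, and specializing the $n$-variable Cauchy limit to the diagonal $m_1=\cdots=m_n=m$ gives $\lim_{m\to\infty}G(\langle x_m\rangle_1^n)=G(\langle a\rangle_1^n)$. Subtracting these two limits yields the required vanishing of the difference.

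For the backward direction I would apply Lemma~\ref{basicinequality}(b) twice with carefully chosen substitutions to produce two-sided bounds on $G(\langle x_{m_i}\rangle_{i=1}^n)-G(\langle a\rangle_1^n)$. Substituting $x_i \mapsto x_{m_i}$ and $y_i \mapsto a$ yields
\[
G(\langle x_{m_i}\rangle_{i=1}^n) \le G(\langle a\rangle_1^n) + \sum_{j=1}^n \bigl(G(\langle a\rangle_1^{n-1}, x_{m_j}) - G(\langle a\rangle_1^n)\bigr),
\]
whose right-hand sum tends to $0$ by the first hypothesis. Substituting instead $x_i \mapsto a$ and $y_i \mapsto x_{m_i}$ gives
\[
G(\langle a\rangle_1^n) \le G(\langle x_{m_i}\rangle_{i=1}^n) + \sum_{j=1}^n \bigl(G(\langle x_{m_j}\rangle_1^{n-1}, a) - G(\langle x_{m_j}\rangle_1^n)\bigr),
\]
whose right-hand sum tends to $0$ by the second hypothesis. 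Since $\langle x_m\rangle$ is Cauchy, the limit $\lim_{m_1,\ldots,m_n\to\infty}G(\langle x_{m_i}\rangle_{i=1}^n)$ exists, and the two bounds squeeze it to equal $G(\langle a\rangle_1^n)$.

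There is no real obstacle here; the only subtle point is recognizing that the second hypothesis is exactly the error term produced by Lemma~\ref{basicinequality}(b) when $y$'s are set to the sequence and $x$'s to $a$, so we do not need each of $G(\langle x_m\rangle_1^{n-1},a)$ and $G(\langle x_m\rangle_1^n)$ to converge separately. This is precisely why the combined difference (rather than the individual terms) appears in the statement.
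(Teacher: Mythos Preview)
Your proposal is correct and follows essentially the same approach as the paper: the forward direction invokes Lemma~\ref{speciallimitprop}(a) (together with the diagonal of the Cauchy limit), and the backward direction squeezes $\lim G(\langle x_{m_i}\rangle_{i=1}^n)$ between two applications of Lemma~\ref{basicinequality}(b). The only cosmetic difference is that for the upper bound the paper cites the packaged form Lemma~\ref{limitprop}(b) rather than reapplying Lemma~\ref{basicinequality}(b) directly.
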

\begin{proof}
If $a$ is a special limit of the Cauchy sequence $\langle x_m\rangle_{m\in\bN}$, then the first condition is trivially satisfied and the second follows by Lemma~\ref{speciallimitprop}(a).

For the converse, assume the two given conditions hold. We want to show 
$$\lim_{m_1, \ldots, m_n\to\infty}G(\langle x_{m_i}\rangle_{i=1}^n) = G(\langle a\rangle_1^n).$$
Since the first condition is satisfied, we have that $a$ is a limit of the sequence $\langle x_m\rangle_{m\in\bN}$. Since $\langle x_m\rangle_{m\in\bN}$ is Cauchy, we know that $\lim_{m_1, \ldots, m_n\to\infty} G(\langle x_{m_i}\rangle_{i=1}^n)$ exists, and thus by Lemma~\ref{limitprop}(b), we have 
$$\lim_{m_1, \ldots, m_n\to\infty}G(\langle x_{m_i}\rangle_{i=1}^n) \le G(\langle a\rangle_1^n).$$
On the other hand, by Lemma~\ref{basicinequality}(b), we also have
\begin{eqnarray*}
G(\langle a\rangle_1^n) & \le & G(\langle x_{m_i}\rangle_{i=1}^n) + \sum_{j = 1}^n (G(\langle x_{m_j}\rangle_1^{n-1}, a) - G(\langle x_{m_j}\rangle_1^n)) \\
\implies G(\langle a\rangle_1^n) & \le & \lim_{m_1, \ldots, m_n\to\infty} G(\langle x_{m_i}\rangle_{i=1}^n) + \sum_{j = 1}^n \lim_{m_j\to\infty}(G(\langle x_{m_j}\rangle_1^{n-1}, a) - G(\langle x_{m_j}\rangle_1^n)) \\
& = & \lim_{m_1, \ldots, m_n\to\infty} G(\langle x_{m_i}\rangle_{i=1}^n) \;\;\;\;\;\;\;\;\;\;\;\;\;\;\;\;\;\;\;\;\mbox{(by the second condition).}
\end{eqnarray*}
Thus, $\lim_{m_1, \ldots, m_n\to\infty}G(\langle x_{m_i}\rangle_{i=1}^n) = G(\langle a\rangle_1^n)$, and hence $a$ is a special limit of $\langle x_m\rangle_{m\in\bN}$.
\end{proof}

Finally, we have the equivalence of {\bf(sep)} and {\bf(sep')}.
\begin{lem}		\label{sepequiv}
The condition {\bf(sep)} is equivalent to the following condition:
\begin{axiom}
\ax{(sep')} $G(\langle x\rangle_1^n) = \cdots = G(\langle x\rangle_1^{n-k}, \langle y\rangle_1^k) =  \cdots = G(\langle y\rangle_1^n) \iff x = y.$
\end{axiom}
\end{lem}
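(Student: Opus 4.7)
The $(\Leftarrow)$ direction of \textbf{(sep')} is trivial: if $x = y$, every entry of the chain reduces to $G(\langle x\rangle_1^n)$. So the content of the lemma lies in the two forward implications.

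\textbf{The easy direction ({\bf(sep)} $\Rightarrow$ {\bf(sep')}).} Suppose {\bf(sep)} holds and assume the full chain
\[
G(\langle x\rangle_1^n) = \cdots = G(\langle x\rangle_1^{n-k}, \langle y\rangle_1^k) =  \cdots = G(\langle y\rangle_1^n).
\]
Reading off the $k = 1$ equality gives $G(\langle x\rangle_1^{n-1}, y) = G(\langle x\rangle_1^n)$, and reading off the $k = n-1$ equality together with condition {\bf(sym)} yields $G(\langle y\rangle_1^{n-1}, x) = G(\langle y\rangle_1^n)$. Then {\bf(sep)} immediately gives $x=y$.

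\textbf{The hard direction ({\bf(sep')} $\Rightarrow$ {\bf(sep)}).} This is the main content. Assume {\bf(sep')} holds and suppose we are given the two equalities of {\bf(sep)}, namely $G(\langle x\rangle_1^{n-1}, y) = G(\langle x\rangle_1^n)$ and $G(\langle y\rangle_1^{n-1}, x) = G(\langle y\rangle_1^n)$. The plan is to promote these two endpoint equalities to the whole chain by sandwiching each intermediate term $G(\langle x\rangle_1^{n-k}, \langle y\rangle_1^k)$ between the two self-distances, in both directions, using Lemma~\ref{basicinequality}(a) twice with carefully chosen arguments.

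For the upper bound, apply Lemma~\ref{basicinequality}(a) with the first $k$ coordinates switched from $y$ to $x$ (i.e.\ take the vector $(x_1,\dots,x_k) = (y,\dots,y)$ and the comparison vector $(y_1,\dots,y_k) = (x,\dots,x)$, with $(z_1,\dots,z_{n-k}) = (x,\dots,x)$) and use the first hypothesis to cancel the telescoping sum; after applying {\bf(sym)} to reorder the arguments, this yields
\[
G(\langle x\rangle_1^{n-k}, \langle y\rangle_1^k) \le G(\langle x\rangle_1^n).
\]
For the matching lower bound, apply Lemma~\ref{basicinequality}(a) in the reverse direction (switch $x$ to $y$ on the first $k$ coordinates, with the same $z$'s) and use the second hypothesis to cancel the sum, giving
\[
G(\langle x\rangle_1^n) \le G(\langle x\rangle_1^{n-k}, \langle y\rangle_1^k).
\]
Hence each intermediate term equals $G(\langle x\rangle_1^n)$. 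Running the same two applications with the roles of $x$ and $y$ swapped (and reindexing $k\mapsto n-k$ via {\bf(sym)}) shows each intermediate term also equals $G(\langle y\rangle_1^n)$. The full chain of {\bf(sep')} therefore holds, and {\bf(sep')} delivers $x=y$.

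\textbf{Expected obstacle.} The only delicate point is choosing the arguments of Lemma~\ref{basicinequality}(a) so that the telescoping sum $\sum_j (G(\langle y_j\rangle_1^{n-1}, x_j) - G(\langle y_j\rangle_1^n))$ collapses to zero exactly via one of the two given hypotheses; the rest is bookkeeping with {\bf(sym)}.
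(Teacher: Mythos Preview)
Your proof is correct and follows essentially the same route as the paper: for the harder direction you sandwich each intermediate term $G(\langle x\rangle_1^{n-k},\langle y\rangle_1^k)$ between the two self-distances via Lemma~\ref{basicinequality}(a), exactly as the paper does with Lemma~\ref{basicinequality}(b) (which is just the $k=n$ case of (a)), and the telescoping sums vanish by the two hypotheses. One small simplification: once you have $G(\langle x\rangle_1^{n-k},\langle y\rangle_1^k) = G(\langle x\rangle_1^n)$ for all $0\le k\le n$, the case $k=n$ already gives $G(\langle y\rangle_1^n) = G(\langle x\rangle_1^n)$, so the full chain is established and the second pass with the roles of $x$ and $y$ swapped is unnecessary.
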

\begin{proof}
It is trivial to check that {\bf(sep)} $\implies$ {\bf(sep')}.

For the converse, assume that $G(\langle x \rangle_1^{n-1}, y) = G(\langle x\rangle_1^n)$ and $G(\langle y \rangle_1^{n-1}, x) = G(\langle y\rangle_1^n)$. We want to show $x = y$. By condition {\bf(sep')}, it suffices to show that 
$$G(\langle x\rangle_1^{n-k}, \langle y\rangle_1^{k}) = G(\langle y\rangle_1^n)\;\;\;\;\;\mbox{ for all } 0\le k\le n.$$
Fix $0\le k\le n$. By two applications of Lemma~\ref{basicinequality}(b), we obtain
\begin{eqnarray*}
& & G(\langle x\rangle_1^{n-k}, \langle y\rangle_1^k)\le G(\langle y\rangle_1^n) + \sum_{j = 1}^{n-k} (G(\langle y\rangle_1^{n-1}, x) - G(\langle y\rangle_1^n)) = G(\langle y\rangle_1^n) \\
& & G(\langle y\rangle_1^n)\le G(\langle x\rangle_1^{n-k}, \langle y\rangle_1^k) + \sum_{j = 1}^{n-k} (G(\langle x\rangle_1^{n-1}, y) - G(\langle x\rangle_1^n)) = G(\langle x\rangle_1^{n-k}, \langle y\rangle_1^k).
\end{eqnarray*}
Thus, $G(\langle x\rangle_1^{n-k}, \langle y\rangle_1^{k}) = G(\langle y\rangle_1^n)$ for all $0\le k\le n$. Consequently, {\bf(sep')}$\implies$ {\bf(sep)}.
\end{proof}

\section{Cauchy Mapping Theorems}
In this section, we prove our two main theorems --- Theorem~\ref{cauchyorbcontnexpfpt} and Theorem~\ref{strongcauchyorbcontnexpfpt}. But before that we need a couple of lemmas.

\begin{lem}		\label{nonexpcriterion}
Let $(X, G)$ be a partial $n$-metric space, $x_0\in X$ be an element, and $f:X\to X$ be a map such that $f$ is Cauchy at $x_0$ with special limit $a$. If $f$ is non-expansive, then $G(\langle a\rangle_1^{n-1}, fa) = G(\langle a\rangle_1^n)$ and $G(\langle fa\rangle_1^{n-1}, a) \le G(\langle a\rangle_1^n)$.
\end{lem}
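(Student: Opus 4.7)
The plan is to combine condition \textbf{(ptri)} with non-expansiveness, specializing the auxiliary point in \textbf{(ptri)} to a term $x_m := f^m x_0$ of the orbit, and then passing to the limit via Lemma~\ref{speciallimitprop}. Throughout, set $x_m = f^m x_0$, so that $x_m = f x_{m-1}$ and $\langle x_m\rangle_{m\in\bN}$ is a Cauchy sequence with special limit $a$.

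For the equality $G(\langle a\rangle_1^{n-1}, fa) = G(\langle a\rangle_1^n)$, the direction $G(\langle a\rangle_1^n) \le G(\langle a\rangle_1^{n-1}, fa)$ is immediate from \textbf{(ssd)}. For the reverse, I would apply \textbf{(ptri)} with $x_1 = \cdots = x_{n-1} = a$, $x_n = fa$, and $y = x_m$, obtaining
\[
G(\langle a\rangle_1^{n-1}, fa) \le G(\langle a\rangle_1^{n-1}, x_m) + G(\langle x_m\rangle_1^{n-1}, fa) - G(\langle x_m\rangle_1^n).
\]
Since $\langle x_m\rangle_1^{n-1} = \langle f x_{m-1}\rangle_1^{n-1}$ and $fa = f(a)$, non-expansiveness gives $G(\langle x_m\rangle_1^{n-1}, fa) \le G(\langle x_{m-1}\rangle_1^{n-1}, a)$. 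Letting $m \to \infty$, the three surviving terms each tend to $G(\langle a\rangle_1^n)$: the first by the special-limit hypothesis, the second by Lemma~\ref{speciallimitprop}(a) (after a harmless index shift from $m$ to $m-1$), and the third by Lemma~\ref{speciallimitprop}(c) restricted to the diagonal $m_1 = \cdots = m_n = m$. The right-hand side collapses to $G(\langle a\rangle_1^n)$, which proves the reverse inequality.

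For the second assertion $G(\langle fa\rangle_1^{n-1}, a) \le G(\langle a\rangle_1^n)$, I would run the same argument with the roles of $a$ and $fa$ swapped in the first $n-1$ slots: apply \textbf{(ptri)} with $x_1 = \cdots = x_{n-1} = fa$, $x_n = a$, and $y = x_m$, then use non-expansiveness in the form $G(\langle fa\rangle_1^{n-1}, x_m) = G(f\langle a\rangle_1^{n-1}, f x_{m-1}) \le G(\langle a\rangle_1^{n-1}, x_{m-1})$, and pass to the limit using exactly the same three special-limit facts to conclude.

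There is no real obstacle here; the proof is essentially mechanical once one spots that \textbf{(ptri)} plus non-expansiveness reduce every term to a quantity controlled by Lemma~\ref{speciallimitprop}. The only care needed is the bookkeeping: identifying the correct clause of Lemma~\ref{speciallimitprop} for each limit, and recognising that index shifts and diagonal restrictions preserve the relevant limits.
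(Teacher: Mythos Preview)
Your proof is correct and follows essentially the same approach as the paper: apply \textbf{(ptri)} with an orbit element as the auxiliary point, bound the resulting term via non-expansiveness, and pass to the limit using Lemma~\ref{speciallimitprop}. The only difference is cosmetic indexing---the paper takes $y = f^{m+1}x_0$ so that non-expansiveness lands on $f^m x_0$, whereas you take $y = x_m$ and land on $x_{m-1}$; the limits are unaffected.
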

\begin{proof}
By condition {\bf(ssd)}, we have that $G(\langle a\rangle_1^n) \le G(\langle a\rangle_1^{n-1}, fa)$. Conversely, for any fixed $m\in\bN$, we have by condition {\bf(ptri)}
\begin{eqnarray*}
G(\langle a\rangle_1^{n-1}, fa) & \le & G(\langle a\rangle_1^{n-1}, f^{m+1}x_0) + G(\langle f^{m+1}x_0\rangle_1^{n-1}, fa) - G(\langle f^{m+1}x_0\rangle_1^n) \\
& \le & G(\langle a\rangle_1^{n-1}, f^{m+1}x_0) + G(\langle f^mx_0\rangle_1^{n-1}, a) - G(\langle f^{m+1}x_0\rangle_1^n).
\end{eqnarray*}
The last step is due to the non-expansiveness of $f$. Taking the limit as $m\to\infty$ and using Lemma~\ref{speciallimitprop}, we obtain
\begin{eqnarray*}
G(\langle a\rangle_1^{n-1}, fa) & \le & \lim_{m\to\infty}G(\langle a\rangle_1^{n-1}, f^{m+1}x_0) + \lim_{m\to\infty}G(\langle f^mx_0\rangle_1^{n-1}, a) - \lim_{m\to\infty}G(\langle f^{m+1}x_0\rangle_1^n) \\
& = & G(\langle a\rangle_1^n) + G(\langle a\rangle_1^n) - G(\langle a\rangle_1^n) \\
& = & G(\langle a\rangle_1^n).
\end{eqnarray*}
Consequently, we have $G(\langle a\rangle_1^{n-1}, fa) = G(\langle a\rangle_1^n)$.

On the other hand, for every fixed $m\in\bN$, we also have by condition {\bf(ptri)} again
\begin{eqnarray*}
G(\langle fa\rangle_1^{n-1}, a) & \le & G(\langle fa\rangle_1^{n-1}, f^{m+1}x_0) + G(\langle f^{m+1}x_0\rangle_1^{n-1}, a) - G(\langle f^{m+1}x_0\rangle_1^n) \\
& \le & G(\langle a\rangle_1^{n-1}, f^mx_0) + G(\langle f^{m+1}x_0\rangle_1^{n-1}, a) - G(\langle f^{m+1}x_0\rangle_1^n).
\end{eqnarray*}
Taking the limit as $m\to\infty$ and using Lemma~\ref{speciallimitprop} again, we obtain that
\begin{eqnarray*}
G(\langle fa\rangle_1^{n-1}, a) & \le & \lim_{m\to\infty}G(\langle a\rangle_1^{n-1}, f^mx_0) + \lim_{m\to\infty}G(\langle f^{m+1}x_0\rangle_1^{n-1}, a) - \lim_{m\to\infty}G(\langle f^{m+1}x_0\rangle_1^n) \\
& = & G(\langle a\rangle_1^n) + G(\langle a\rangle_1^n) - G(\langle a\rangle_1^n) \\
& = & G(\langle a\rangle_1^n).
\end{eqnarray*}
And thus, we have $G(\langle fa\rangle_1^{n-1}, a) \le G(\langle a\rangle_1^n)$. 
\end{proof}

\begin{lem}		\label{orbcontcriterion}
Let $(X, G)$ be a partial $n$-metric space, $x_0\in X$ be an element, and $f:X\to X$ be a map such that $f$ is Cauchy at $x_0$ with special limit $a$. If $f$ is orbitally continuous at $x_0$ for $a$, then $G(\langle fa\rangle_1^{n-1}, a) = G(\langle fa\rangle_1^n)$ and $G(\langle a\rangle_1^{n-1}, fa) \le G(\langle fa\rangle_1^n)$. 
\end{lem}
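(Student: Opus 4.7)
The plan is to mirror the proof of Lemma~\ref{nonexpcriterion}, but with the roles of $a$ and $fa$ swapped, since it is now $fa$---not $a$---whose $G$-distance to the orbit is controlled directly. The key observation is that, by definition, a special limit is in particular a limit in the sense of Definition~\ref{cauchylimit}, so orbital continuity at $x_0$ for $a$ promotes this to
$$\lim_{m\to\infty} G(\langle fa\rangle_1^{n-1}, f^{m+1}x_0) = G(\langle fa\rangle_1^n).$$
This, together with Lemma~\ref{speciallimitprop}, supplies all the limiting values needed below.

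For the equality $G(\langle fa\rangle_1^{n-1}, a) = G(\langle fa\rangle_1^n)$, one direction is immediate from {\bf(ssd)}. For the reverse, I would apply {\bf(ptri)} to the tuple $(\langle fa\rangle_1^{n-1}, a)$ with intermediate point $f^{m+1}x_0$, yielding
$$G(\langle fa\rangle_1^{n-1}, a) \le G(\langle fa\rangle_1^{n-1}, f^{m+1}x_0) + G(\langle f^{m+1}x_0\rangle_1^{n-1}, a) - G(\langle f^{m+1}x_0\rangle_1^n).$$
Letting $m\to\infty$, the first term tends to $G(\langle fa\rangle_1^n)$ by orbital continuity, while the second and third both tend to $G(\langle a\rangle_1^n)$---by Lemma~\ref{speciallimitprop}(a) applied to the Cauchy sequence $\langle f^mx_0\rangle$ with special limit $a$, and by the definition of a special limit, respectively. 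The two copies of $G(\langle a\rangle_1^n)$ cancel, leaving the desired bound.

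For the inequality $G(\langle a\rangle_1^{n-1}, fa) \le G(\langle fa\rangle_1^n)$, the cleanest route is Lemma~\ref{basicinequality}(c) with $x = a$ and $y = fa$, which gives
$$G(\langle a\rangle_1^{n-1}, fa) \le (n-1)\, G(\langle fa\rangle_1^{n-1}, a) - (n-2)\, G(\langle fa\rangle_1^n).$$
Substituting the equality just established collapses the right-hand side to $G(\langle fa\rangle_1^n)$, finishing the proof.

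The main conceptual obstacle is that orbital continuity controls $G(\langle fa\rangle_1^{n-1}, f^{m+1}x_0)$ but \emph{not} the ``transposed'' quantity $G(\langle f^{m+1}x_0\rangle_1^{n-1}, fa)$, since {\bf(sym)} only permutes within a fixed multiset of arguments. This asymmetry is what forces the use of Lemma~\ref{basicinequality}(c) to swap the ``central'' element from $fa$ to $a$ when proving the second claim, rather than attempting a direct {\bf(ptri)} argument fully parallel to the one used for the first claim.
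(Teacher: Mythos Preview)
Your proof is correct. The first claim is handled exactly as in the paper (you route through $f^{m+1}x_0$ rather than $f^m x_0$, which is immaterial). For the second claim, however, you take a genuinely shorter path: the paper proves $G(\langle a\rangle_1^{n-1}, fa)\le G(\langle fa\rangle_1^n)$ by running a second limit argument---applying {\bf(ptri)} with intermediate point $f^m x_0$, then invoking Lemma~\ref{basicinequality}(c) to flip $G(\langle f^m x_0\rangle_1^{n-1}, fa)$ into something controlled by orbital continuity, and finally passing to the limit. Your observation that Lemma~\ref{basicinequality}(c) applied directly to $x=a$, $y=fa$ collapses to the desired bound once the first equality is in hand is cleaner and avoids the second pass through the limit machinery entirely. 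The paper's route has the minor virtue of being self-contained (it does not rely on the first claim), but yours is the more economical argument.
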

\begin{proof}
Observe that since $f$ is orbitally continuous at $x_0$ for $a$, we have that $fa$ is a limit of $\langle f^mx_0\rangle_{m\in\bN}$ (and not necessarily a special limit).

By condition {\bf(ssd)}, we have that $G(\langle fa\rangle_1^n) \le G(\langle fa\rangle_1^{n-1}, a)$. Conversely, for any fixed $m\in\bN$, we have by condition {\bf(ptri)}
\begin{eqnarray*}
G(\langle fa\rangle_1^{n-1}, a) & \le & G(\langle fa\rangle_1^{n-1}, f^mx_0) + G(\langle f^mx_0\rangle_1^{n-1}, a) - G(\langle f^mx_0\rangle_1^n).
\end{eqnarray*}
Taking the limit as $m\to\infty$ and using Lemma~\ref{speciallimitprop}, we obtain that
\begin{eqnarray*}
G(\langle fa\rangle_1^{n-1}, a) & \le & \lim_{m\to\infty}G(\langle fa\rangle_1^{n-1}, f^mx_0) + \lim_{m\to\infty}G(\langle f^mx_0\rangle_1^{n-1}, a) - \lim_{m\to\infty}G(\langle f^mx_0\rangle_1^n) \\
& = & G(\langle fa\rangle_1^n) + G(\langle a\rangle_1^n) - G(\langle a\rangle_1^n) \\
& = & G(\langle fa\rangle_1^n).
\end{eqnarray*}
Consequently, we have $G(\langle fa\rangle_1^{n-1}, a) = G(\langle fa\rangle_1^n)$.

On the other hand, for any fixed $m\in\bN$, we have by condition {\bf(ptri)} and Lemma~\ref{basicinequality}(c),
\begin{eqnarray*}
G(\langle a\rangle_1^{n-1}, fa) & \le & G(\langle a\rangle_1^{n-1}, f^mx_0) + G(\langle f^mx_0\rangle_1^{n-1}, fa) - G(\langle f^mx_0\rangle_1^n) \\
& \le & G(\langle a\rangle_1^{n-1}, f^mx_0) + (n-1)\, G(\langle fa\rangle_1^{n-1}, f^mx_0) - (n-2)\,G(\langle fa\rangle_1^n) \\
& & -\, G(\langle f^mx_0\rangle_1^n).
\end{eqnarray*}
Taking the limit as $m\to\infty$ and using Lemma~\ref{speciallimitprop} again, we obtain that
\begin{eqnarray*}
G(\langle a\rangle_1^{n-1}, fa) & \le & \lim_{m\to\infty}G(\langle a\rangle_1^{n-1}, f^mx_0) + (n-1)\lim_{m\to\infty} G(\langle fa\rangle_1^{n-1}, f^mx_0) \\
& & -\, \lim_{m\to\infty}G(\langle f^mx_0\rangle_1^n) -  (n-2)\,G(\langle fa\rangle_1^n)\\
& = & G(\langle a\rangle_1^n) + (n-1)\,G(\langle fa\rangle_1^n) - G(\langle a\rangle_1^n) - (n-2)\,G(\langle fa\rangle_1^n) \\
& = & G(\langle fa\rangle_1^n).
\end{eqnarray*}
And thus, we have $G(\langle a\rangle_1^{n-1}, fa) \le G(\langle fa\rangle_1^n)$.
\end{proof}

We are now ready to prove our main theorems.
\newline\newline
\underline{\bf Proof of Theorem~\ref{cauchyorbcontnexpfpt}}
\begin{proof}
We deal with the three cases separately.
\newline
{\bf Case I:} $f$ is non-expansive and orbitally continuous at $x_0$ for $a$.
\newline
Since $f$ is non-expansive, it follows by Lemma~\ref{nonexpcriterion} that $G(\langle a\rangle_1^{n-1}, fa) = G(\langle a\rangle_1^n)$.\newline
Since $f$ is orbitally continuous at $x_0$ for $a$, it follows by Lemma~\ref{orbcontcriterion} that 
$$G(\langle fa\rangle_1^{n-1}, a) = G(\langle fa\rangle_1^n).$$
Thus, by condition {\bf(sep)}, we have that $fa = a$, i.e., $a$ is a fixed point of $f$.
\newline\newline
{\bf Case II:} $f$ is orbitally continuous at $x_0$ for $a$ and the self distances of $(X, G)$ are bounded below by $G(\langle fa\rangle_1^n)$.
\newline
Since $f$ is orbitally continuous at $x_0$ for $a$, it follows by Lemma~\ref{orbcontcriterion} that 
$$G(\langle fa\rangle_1^{n-1}, a) = G(\langle fa\rangle_1^n) \;\;\mbox{ and }\;\; G(\langle a\rangle_1^{n-1}, fa) \le G(\langle fa\rangle_1^n).$$
Since the self distances of $(X, G)$ are bounded below by $G(\langle fa\rangle_1^n)$, it follows that 
$$G(\langle fa\rangle_1^n) \le G(\langle a\rangle_1^n).$$
By condition {\bf(ssd)}, we have that $G(\langle a\rangle_1^n) \le G(\langle a\rangle_1^{n-1}, fa)$. \newline
Combining all of these together, we get that $G(\langle a\rangle_1^{n-1}, fa) = G(\langle a\rangle_1^n)$. \newline
Consequently, by condition {\bf(sep)}, we have that $fa = a$, i.e., $a$ is a fixed point of $f$.
\newline\newline
{\bf Case III:} $f$ is non-expansive and the self distances of $(X, G)$ are bounded below by $G(\langle a\rangle_1^n)$.
\newline
Since $f$ is non-expansive, it follows by Lemma~\ref{nonexpcriterion} that 
$$G(\langle a\rangle_1^{n-1}, fa) = G(\langle a\rangle_1^n) \;\;\mbox{ and }\;\; G(\langle fa\rangle_1^{n-1}, a) \le G(\langle a\rangle_1^n).$$ 
Since the self distances of $(X, G)$ are bounded below by $G(\langle a\rangle_1^n$, it follows that 
$$G(\langle a\rangle_1^n) \le G(\langle fa\rangle_1^n).$$
By condition {\bf(ssd)}, we have that $G(\langle fa\rangle_1^n) \le G(\langle fa\rangle_1^{n-1}, a)$. \newline 
Combining all of these together, we get that $G(\langle fa\rangle_1^{n-1}, a) = G(\langle fa\rangle_1^n)$. \newline
Hence, by condition {\bf(sep)}, we have that $fa = a$, i.e., $a$ is a fixed point of $f$.
\end{proof}

\vspace{1em}
\hspace{-1.0em}\underline{\bf Proof of Theorem~\ref{strongcauchyorbcontnexpfpt}}
\begin{proof}
We deal with the two cases separately.
\newline
{\bf Case I:} $f$ is non-expansive.
\newline
Since $f$ is non-expansive, it follows by Lemma~\ref{nonexpcriterion} that 
$$G(\langle a\rangle_1^{n-1}, fa) = G(\langle a\rangle_1^n).$$
Since $(X, G)$ is strong, it then follows by condition ({\bf sssd}) that $fa = a$.
\newline\newline
{\bf Case II:} $f$ is orbitally continuous at $x_0$ for $a$.
\newline
Since $f$ is orbitally continuous at $x_0$ for $a$, it follows by Lemma~\ref{orbcontcriterion} that 
$$G(\langle fa\rangle_1^{n-1}, a) = G(\langle fa\rangle_1^n).$$
Since $(X, G)$ is strong, it then follows by condition ({\bf sssd}) that $fa = a$.
\end{proof}

\section{Orbitally $r$-contractive maps}
Let $(X, G)$ be a partial $n$-metric space. In the previous section, we showed the existence of a fixed point for a function $f:X\to X$ under the assumption that there is an element $x_0\in X$ such that $f$ is Cauchy at $x_0$. In this section, we give an example of a particular class of functions, which we call ``orbitally $r$-contractive'', that in fact satisfies this condition. Lemma~\ref{orbcontrCauchy} establishes this claim. These functions are our analogues of contractive (rather, orbitally contractive) functions suitable to our context.
\begin{defn}		\label{selfcontmap}
Take a partial $n$-metric space $(X, G)$, an element $x_0\in X$, a number $r\in\bR$, and a map $f: X\to X$. We say $f$ is {\em orbitally $r$-contractive at $x_0$} if there exists a real number $c$ with $0\le c < 1$ such that the following two conditions hold for all $m\in\bN$:
\begin{itemize}
\item $r\le G(\langle f^mx_0\rangle_1^n)$
\item $G(\langle f^mx_0\rangle_1^{n-1}, f^{m+1}x_0) \le r + c^m\,|G(\langle x_0\rangle_1^{n-1}, fx_0)|$.
\end{itemize}
And we say $f$ is {\em orbitally $r$-contractive} if $f$ is orbitally $r$-contractive at every $x\in X$.
\end{defn}

\begin{lem}		\label{orbcontrCauchy}
For each partial $n$-metric space $(X, G)$, element $x_0\in X$, real number $r\in\bR$, and map $f:X\to X$ orbitally $r$-contractive at $x_0$, the orbit $\langle f^mx_0\rangle_{m\in\bN}$ of $x_0$ under $f$ is a Cauchy sequence in $(X, G)$ with $\lim_{m_1, \ldots, m_n\to\infty}G(\langle f^{m_i}x_0\rangle_{i=1}^n) = r$.
\end{lem}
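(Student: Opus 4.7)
The plan is to apply Lemma~\ref{equivCauchy}, which reduces the task to showing $\lim_{m_1, m_2 \to \infty} G(\langle f^{m_1}x_0\rangle_1^{n-1}, f^{m_2}x_0) = r$. Write $x_m := f^mx_0$ and $D := |G(\langle x_0\rangle_1^{n-1}, x_1)|$. The lower bound $r \le G(\langle x_{m_1}\rangle_1^{n-1}, x_{m_2})$ is immediate: by hypothesis $r \le G(\langle x_{m_1}\rangle_1^n)$, and by condition {\bf(ssd)}, $G(\langle x_{m_1}\rangle_1^n) \le G(\langle x_{m_1}\rangle_1^{n-1}, x_{m_2})$. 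So the entire difficulty lies in producing a matching upper bound that tends to $r$.

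First I would handle the case $m_1 \le m_2$ by applying condition {\bf(ptri)} iteratively with intermediate elements $x_{m_1+1}, x_{m_1+2}, \ldots, x_{m_2}$. Each step produces a telescoping inequality of the form
\[
G(\langle x_k\rangle_1^{n-1}, x_{m_2}) - G(\langle x_{k+1}\rangle_1^{n-1}, x_{m_2}) \le G(\langle x_k\rangle_1^{n-1}, x_{k+1}) - G(\langle x_{k+1}\rangle_1^n) \le (r + c^k D) - r = c^k D,
\]
using the hypothesis $G(\langle x_k\rangle_1^{n-1}, x_{k+1}) \le r + c^k D$ and the self-distance lower bound. Summing the geometric series from $k = m_1$ to $k = m_2 - 1$ and using that $G(\langle x_{m_2}\rangle_1^n) \le G(\langle x_{m_2}\rangle_1^{n-1}, x_{m_2+1}) \le r + c^{m_2}D$ (again by {\bf(ssd)} and the contraction hypothesis) gives
\[
G(\langle x_{m_1}\rangle_1^{n-1}, x_{m_2}) \le r + c^{m_2}D + \frac{c^{m_1}(1 - c^{m_2 - m_1})}{1 - c}\, D,
\]
which tends to $r$ as $m_1, m_2 \to \infty$ since $0 \le c < 1$.

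The case $m_2 < m_1$ is the main obstacle, because $G(\langle x_{m_1}\rangle_1^{n-1}, x_{m_2})$ is not symmetric in $m_1, m_2$ and the telescoping argument naturally runs forward along the orbit. To bypass this, I would invoke Lemma~\ref{basicinequality}(c) to swap the roles:
\[
G(\langle x_{m_1}\rangle_1^{n-1}, x_{m_2}) \le (n-1)\,G(\langle x_{m_2}\rangle_1^{n-1}, x_{m_1}) - (n-2)\,G(\langle x_{m_2}\rangle_1^n),
\]
and then apply the bound from the previous case (with $m_2 \le m_1$) to $G(\langle x_{m_2}\rangle_1^{n-1}, x_{m_1})$, together with $r \le G(\langle x_{m_2}\rangle_1^n)$. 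This yields an upper bound of the form $r + C \cdot c^{\min(m_1,m_2)}D$ for some constant $C$ depending only on $n$ and $c$, which again converges to $r$. Combining both cases with the lower bound finishes the proof.
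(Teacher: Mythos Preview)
Your proposal is correct and follows essentially the same strategy as the paper: reduce to the two-index limit via Lemma~\ref{equivCauchy}, establish the lower bound from {\bf(ssd)} and the hypothesis $r\le G(\langle x_m\rangle_1^n)$, and obtain the upper bound by iterating {\bf(ptri)} along the orbit and summing the resulting geometric series. The only noteworthy difference is your treatment of the asymmetric case $m_2 < m_1$: the paper simply asserts that ``a similar argument works,'' whereas you make this explicit by invoking Lemma~\ref{basicinequality}(c) to swap the roles of $m_1$ and $m_2$ before applying the forward telescoping bound. This is a reasonable way to fill in that detail, since the contraction hypothesis only directly controls $G(\langle x_k\rangle_1^{n-1}, x_{k+1})$ and not $G(\langle x_{k+1}\rangle_1^{n-1}, x_k)$; the swap via Lemma~\ref{basicinequality}(c) costs only a harmless constant factor of $(n-1)$ in front of the vanishing term.
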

\begin{proof}
Since $f$ is orbitally $r$-contractive at $x_0$, there is $0\le c < 1$ such that for all $m\in\bN$
\begin{eqnarray*}
& & r\le G(\langle f^mx_0\rangle_1^n) \\
& & G(\langle f^mx_0\rangle_1^{n-1}, f^{m+1}x_0) \le r + c^m\,|G(\langle x_0\rangle_1^{n-1}, fx_0)|.
\end{eqnarray*}
Let $m_1, m_2$ be arbitrary. Without loss of generality, we can assume $m_2 > m_1$ (a similar argument works if $m_1 > m_2$). Write $m_2 = m_1+k+1$ for some $k \ge 0$. Then we have
\begin{eqnarray*}		\label{eqn1}
\nonumber
& & G(\langle f^{m_1}x_0\rangle_1^{n-1}, f^{m_2}x_0) \\
& = & G(\langle f^{m_1}x_0\rangle_1^{n-1}, f^{m_1+k+1}x_0) \\
& \le & G(\langle f^{m_1}x_0\rangle_1^{n-1}, f^{m_1+1}x_0) + G(\langle f^{m_1+1}x_0\rangle_1^{n-1}, f^{m_1+k+1}x_0) - G(\langle f^{m_1+1}x_0\rangle_1^n) \\
& \le & r + c^{m_1}\,|G(\langle x_0\rangle_1^{n-1}, fx_0)| + G(\langle f^{m_1+1}x_0\rangle_1^{n-1}, f^{m_1+k+1}x_0) - r \\
& = & c^{m_1}\,|G(\langle x_0\rangle_1^{n-1}, fx_0)| + G(\langle f^{m_1+1}x_0\rangle_1^{n-1}, f^{m_1+k+1}x_0) \\
& \le & \cdots \\
& \le & (c^{m_1} + \cdots + c^{m_1+k-1})\,|G(\langle x_0\rangle_1^{n-1}, fx_0)| + G(\langle f^{m_1+k}x_0\rangle_1^{n-1}, f^{m_1+k+1}x_0)
\end{eqnarray*}
\begin{eqnarray*}
& \le & (c^{m_1} + \cdots + c^{m_1+k-1})\,|G(\langle x_0\rangle_1^{n-1}, fx_0)| + r + c^{m_1+k}|G(\langle x_0\rangle_1^{n-1}, fx_0)| \\
& = & (c^{m_1} + \cdots + c^{m_1+k})\,|G(\langle x_0\rangle_1^{n-1}, fx_0)| + r \\
& = & r + c^{m_1}\dfrac{1 - c^{k+1}}{1 - c}\,|G(\langle x_0\rangle_1^{n-1}, fx_0)| \\
& \le & r + c^{m_1}\dfrac{1}{1 - c}\,|G(\langle x_0\rangle_1^{n-1}, fx_0)|.
\end{eqnarray*}
Taking the limit as $m_1\to\infty$, the right hand side of the above inequality goes to $r$ (since $0 \le c < 1$). Since also $r\le G(\langle f^{m_1}x_0\rangle_1^n)\le G(\langle f^{m_1}x_0\rangle_1^{n-1}, f^{m_2}x_0)$ for all $m_1, m_2$, we have
$$\lim_{m_1, m_2\to\infty} G(\langle f^{m_1}x_0\rangle_1^{n-1}, f^{m_2}x_0) = r.$$
By Lemma~\ref{equivCauchy}, we thus obtain
$$\lim_{m_1, \ldots, m_n\to\infty} G(\langle f^{m_i}x_0\rangle_{i=1}^n) = r,$$
and hence $\langle f^mx_0\rangle_{m\in\bN}$ is Cauchy.
\end{proof}

It thus follows that an orbitally $r$-contractive map is Cauchy for every $r\in\bR$. Because of this property, the orbitally $r$-contractive functions provide more examples of fixed point theorems. But for that we need the existence of special limits. The following weakening of completeness suffices for our fixed point theorems to work. 
\begin{defn}		\label{orbcomp}
Given a partial $n$-metric space $(X, G)$ and a map $f:X\to X$, the space $(X, G)$ is called {\em orbitally complete for $f$} if every Cauchy sequence in $(X, G)$ of the form $\langle f^mx_0\rangle_{m\in\bN}$, for $x_0\in X$, has a special limit $a\in X$.
\end{defn}

Combining this with the results of the previous section, we obtain the following.
\begin{cor}		\label{cauchyorbcontractivefpt}
Let $(X, G)$ be a partial $n$-metric space, $r\in\bR$, $x_0\in X$ and $f:X\to X$ be a map such that $f$ is orbitally $r$-contractive at $x_0$ and $(X, G)$ is orbitally complete for $f$. Further assume that one of the following holds:
\begin{enumerate}
\item $f$ is non-expansive and orbitally continuous at $x_0$;
\item $f$ is non-expansive and the self distances of $(X, G)$ are bounded below by $r$.
\end{enumerate}
Then there exists $a\in X$ such that $fa = a$ and $G(\langle a\rangle_1^n) = r$. 
\end{cor}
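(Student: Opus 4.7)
The plan is to chain together the preceding results: first extract the Cauchy orbit via orbital $r$-contractivity, then use orbital completeness to obtain a special limit $a$, then identify $G(\langle a\rangle_1^n)$ with $r$, and finally invoke Theorem~\ref{cauchyorbcontnexpfpt} to conclude that $fa=a$.

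More concretely, I would proceed as follows. Since $f$ is orbitally $r$-contractive at $x_0$, Lemma~\ref{orbcontrCauchy} shows that $\langle f^mx_0\rangle_{m\in\bN}$ is a Cauchy sequence in $(X,G)$ with
\[
\lim_{m_1,\ldots,m_n\to\infty} G(\langle f^{m_i}x_0\rangle_{i=1}^n) = r.
\]
Since $(X,G)$ is orbitally complete for $f$, Definition~\ref{orbcomp} gives a special limit $a\in X$ of this Cauchy sequence. By the definition of special limit (Definition~\ref{cauchylimit}),
\[
G(\langle a\rangle_1^n) = \lim_{m_1,\ldots,m_n\to\infty}G(\langle f^{m_i}x_0\rangle_{i=1}^n) = r.
\]
This is the identification I need to feed the remaining hypotheses into Theorem~\ref{cauchyorbcontnexpfpt}.

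Now I split into the two cases of the corollary. In case~(1), $f$ is non-expansive and orbitally continuous at $x_0$, so in particular $f$ is orbitally continuous at $x_0$ for $a$; this is exactly the hypothesis of case~(1) of Theorem~\ref{cauchyorbcontnexpfpt}, and hence $fa=a$. In case~(2), $f$ is non-expansive and the self distances are bounded below by $r$; but we have just shown $r = G(\langle a\rangle_1^n)$, so this is the hypothesis of case~(3) of Theorem~\ref{cauchyorbcontnexpfpt}, again giving $fa=a$. In both cases $a$ is the desired fixed point, and $G(\langle a\rangle_1^n)=r$ has already been established.

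There is no real obstacle here, since the corollary is designed precisely as a packaging of Lemma~\ref{orbcontrCauchy} with Theorem~\ref{cauchyorbcontnexpfpt}; the only thing to notice is the small but essential observation that the special limit produced by orbital completeness has self-distance equal to the common Cauchy limit $r$, which is what allows the bound on self distances in case~(2) to line up with case~(3) of the main theorem.
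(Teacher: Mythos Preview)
Your proof is correct and follows essentially the same approach as the paper: apply Lemma~\ref{orbcontrCauchy} to get a Cauchy orbit with limit $r$, use orbital completeness to obtain a special limit $a$ with $G(\langle a\rangle_1^n)=r$, and then invoke Theorem~\ref{cauchyorbcontnexpfpt}. Your explicit matching of case~(2) to case~(3) of the theorem via $r = G(\langle a\rangle_1^n)$ is exactly the point the paper leaves implicit.
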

\begin{proof}
By Lemma~\ref{orbcontrCauchy}, the orbit $\langle f^mx_0\rangle_{m\in\bN}$ of $x_0$ under $f$ is a Cauchy sequence with $\lim_{m_1, \ldots, m_n\to\infty} G(\langle f^{m_i}x_0\rangle_{i=1}^n) = r$. Since $(X, G)$ is orbitally complete for $f$, there is an element $a\in X$ such that $a$ is a special limit of $\langle f^mx_0\rangle_{m\in\bN}$. By Definition~\ref{cauchylimit}, we have
$$G(\langle a\rangle_1^n) = \lim_{m_1, \ldots, m_n\to\infty} G(\langle f^{m_i}x_0\rangle_{i=1}^n) = r.$$
Finally, by Theorem~\ref{cauchyorbcontnexpfpt}, we have $fa = a$, i.e., $a$ is a fixed point of $f$.
\end{proof}

An analogous proof using Theorem~\ref{strongcauchyorbcontnexpfpt} instead of Theorem~\ref{cauchyorbcontnexpfpt} then gives the following.
\begin{cor}		\label{strongcauchyorbcontractivefpt}
Let $(X, G)$ be a strong partial $n$-metric space, $r\in\bR$, $x_0\in X$ and $f:X\to X$ be a map such that $f$ is orbitally $r$-contractive at $x_0$ and $(X, G)$ is orbitally complete for $f$. Further assume that one of the following holds:
\begin{enumerate}
\item $f$ is non-expansive;
\item $f$ is orbitally continuous at $x_0$.
\end{enumerate}
Then there exists $a\in X$ such that $fa = a$ and $G(\langle a\rangle_1^n) = r$.
\end{cor}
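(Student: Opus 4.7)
The plan is to mirror the proof of Corollary~\ref{cauchyorbcontractivefpt}, replacing the final invocation of Theorem~\ref{cauchyorbcontnexpfpt} with the strong-version Theorem~\ref{strongcauchyorbcontnexpfpt}. Since the hypotheses of Corollary~\ref{strongcauchyorbcontractivefpt} are \emph{exactly} the hypotheses of Theorem~\ref{strongcauchyorbcontnexpfpt} plus an orbital $r$-contractivity assumption that is doing the work of producing the special limit, there is essentially nothing new to do.

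First, I would invoke Lemma~\ref{orbcontrCauchy}: since $f$ is orbitally $r$-contractive at $x_0$, the orbit $\langle f^m x_0\rangle_{m\in\bN}$ is a Cauchy sequence in $(X, G)$ with
$$\lim_{m_1, \ldots, m_n\to\infty} G(\langle f^{m_i}x_0\rangle_{i=1}^n) = r.$$
Next, because $(X, G)$ is orbitally complete for $f$, this Cauchy orbit has a special limit $a \in X$, and by the definition of special limit (Definition~\ref{cauchylimit}) we immediately read off
$$G(\langle a\rangle_1^n) = \lim_{m_1, \ldots, m_n\to\infty} G(\langle f^{m_i}x_0\rangle_{i=1}^n) = r.$$

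At this point $f$ is Cauchy at $x_0$ with special limit $a$, and either (1) $f$ is non-expansive or (2) $f$ is orbitally continuous at $x_0$ for $a$ (the latter being implied by orbital continuity at $x_0$). These are precisely the two alternative hypotheses of Theorem~\ref{strongcauchyorbcontnexpfpt}, which applies since $(X, G)$ is strong. Therefore $fa = a$, which together with $G(\langle a\rangle_1^n) = r$ gives the conclusion.

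There is no real obstacle: every step is a direct citation. The only thing to be a bit careful about is that in case (2) the hypothesis ``orbitally continuous at $x_0$'' (which quantifies over all $z \in X$) trivially specializes to ``orbitally continuous at $x_0$ for $a$,'' so Theorem~\ref{strongcauchyorbcontnexpfpt} really does apply. No additional bound on self-distances is needed here, in contrast to Corollary~\ref{cauchyorbcontractivefpt}, because the strength condition \textbf{(sssd)} subsumes the role played by \textbf{(sep)} together with the self-distance bound in the non-strong setting.
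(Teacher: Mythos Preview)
Your proposal is correct and matches the paper's own approach exactly: the paper does not spell out a proof but simply says ``An analogous proof using Theorem~\ref{strongcauchyorbcontnexpfpt} instead of Theorem~\ref{cauchyorbcontnexpfpt} then gives the following,'' and your argument is precisely that analogous proof. Your observation that ``orbitally continuous at $x_0$'' specializes to ``orbitally continuous at $x_0$ for $a$'' is the only point one might want to make explicit, and you handled it correctly.
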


\section{Orbitally $\phi_r$-contractive maps}
In this section, we define another class of functions, which we call ``orbitally $\phi_r$-contractive'', that also satisfies the property of being Cauchy and for which we get similar fixed point theorems. Lemma~\ref{phircontmaplem} establishes this claim. This definition generalizes the corresponding definition used in \cite{SAKP_PM} for partial metric spaces.

\begin{defn}	\label{phircont}
Take a partial $n$-metric space $(X, G)$, an element $x_0\in X$, a number $r\in\bR$, and a map $f:X\to X$. We say $f$ is {\em orbitally $\phi_r-$contractive at $x_0$} if there exists a continuous non-decreasing function $\phi: [r, \infty)\to [0, \infty)$ with $\phi(r) = 0$ and $\phi(t) > 0$ for all $t > r$ such that the following two conditions hold for all $m_1, m_2\in\bN$
\begin{itemize}
\item $r \le G(\langle f^{m_1}x_0\rangle_1^n)$
\item $G(\langle f^{m_1 + 1}x_0\rangle_1^{n-1}, f^{m_2 + 1}(x)) \le G(\langle f^{m_1}x_0\rangle_1^{n-1}, f^{m_2}x_0) - \phi(G(\langle f^{m_1}x_0\rangle_1^{n-1}, f^{m_2}x_0))$.
\end{itemize}
We say $f$ is {\em orbitally $\phi_r-$contractive} if it is orbitally $\phi_r-$contractive at every $x\in X$.
\end{defn}

\begin{lem}		\label{phircontmaplem}
For each partial $n$-metric space $(X, G)$, element $x_0\in X$, real number $r\in\bR$, and map $f:X\to X$ orbitally $\phi_r$-contractive at $x_0$, the orbit $\langle f^mx_0\rangle_{m\in\bN}$ of $x_0$ under $f$ is a Cauchy sequence in $(X, G)$ with $\lim_{m_1, \ldots, m_n\to\infty} G(\langle f^{m_i}x_0\rangle_{i=1}^n) = r$.
\end{lem}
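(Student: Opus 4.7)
Write $a(m_1, m_2) := G(\langle f^{m_1}x_0\rangle_1^{n-1}, f^{m_2}x_0)$, so the $\phi_r$-contraction reads $a(m_1+1, m_2+1) \le a(m_1, m_2) - \phi(a(m_1, m_2))$, while {\bf(ssd)} together with the first bullet of Definition~\ref{phircont} gives $a(m_1, m_2) \ge G(\langle f^{m_1}x_0\rangle_1^n) \ge r$ for all $m_1, m_2$. The plan is to handle consecutive distances first, then promote to arbitrary pairs $(m_1, m_2)$ by a contradiction argument, and finally lift from two indices to $n$ via Lemma~\ref{equivCauchy}. For the first step, the sequence $b_m := a(m, m+1)$ is non-increasing by the contraction and bounded below by $r$, hence converges to some $L \ge r$; continuity of $\phi$ then forces $\phi(L) \le 0$, so $L = r$. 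The squeeze $r \le G(\langle f^m x_0\rangle_1^n) \le b_m$ also gives $G(\langle f^m x_0\rangle_1^n) \to r$, and Lemma~\ref{basicinequality}(c) combined with {\bf(ssd)} yields $G(\langle f^{m+1}x_0\rangle_1^{n-1}, f^m x_0) \to r$ as well.

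For the second step, suppose toward a contradiction that $a(m_1, m_2) \not\to r$. Then there exist $\epsilon > 0$ and sequences $p_k, q_k \to \infty$ with $a(p_k, q_k) \ge r + \epsilon$. Step~1 excludes $p_k = q_k$ for large $k$, and Lemma~\ref{basicinequality}(c) reduces the case $q_k < p_k$ to the case $q_k > p_k$ (with a smaller $\epsilon$), so WLOG $q_k > p_k$. Since $b_{p_k} \to r$, we may replace $q_k$ by the smallest $q > p_k$ with $a(p_k, q) \ge r + \epsilon$, so that $q_k \ge p_k + 2$ and $a(p_k, q_k - 1) < r + \epsilon$. Applying {\bf(ptri)} with pivot $y = f^{q_k - 1}x_0$ yields
$$a(p_k, q_k) \le a(p_k, q_k - 1) + b_{q_k - 1} - G(\langle f^{q_k - 1}x_0\rangle_1^n),$$
whose right side tends to $r + \epsilon$; together with $a(p_k, q_k) \ge r + \epsilon$ this forces $a(p_k, q_k) \to r + \epsilon$.

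The third step pulls this back by one: I claim $a(p_k - 1, q_k - 1) \to r + \epsilon$ as well. The upper bound $\limsup \le r + \epsilon$ follows from {\bf(ptri)} at $y = f^{p_k}x_0$ applied to $a(p_k - 1, q_k - 1)$, whose right side is $b_{p_k - 1} + a(p_k, q_k - 1) - G(\langle f^{p_k}x_0\rangle_1^n) \to r + \epsilon$. For the matching lower bound, chaining {\bf(ptri)} with pivots $y_1 = f^{p_k - 1}x_0$ and then $y_2 = f^{q_k - 1}x_0$ inside $a(p_k, q_k)$ gives
$$a(p_k, q_k) \le G(\langle f^{p_k}x_0\rangle_1^{n-1}, f^{p_k - 1}x_0) + a(p_k - 1, q_k - 1) + b_{q_k - 1} - G(\langle f^{q_k - 1}x_0\rangle_1^n) - G(\langle f^{p_k - 1}x_0\rangle_1^n),$$
and Step~1 ensures every additive term on the right other than $a(p_k - 1, q_k - 1)$ converges to $r$. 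Rearranging and passing to $\liminf$ gives $\liminf a(p_k - 1, q_k - 1) \ge r + \epsilon$, so $a(p_k - 1, q_k - 1) \to r + \epsilon$.

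Finally, the $\phi_r$-contraction at $(p_k - 1, q_k - 1)$ reads $a(p_k, q_k) \le a(p_k - 1, q_k - 1) - \phi(a(p_k - 1, q_k - 1))$; letting $k \to \infty$ and using continuity of $\phi$ gives $r + \epsilon \le (r + \epsilon) - \phi(r + \epsilon)$, contradicting $\phi(r + \epsilon) > 0$. Hence $\lim_{m_1, m_2 \to \infty} a(m_1, m_2) = r$, and Lemma~\ref{equivCauchy} upgrades this to the full $n$-variable limit $\lim_{m_1, \ldots, m_n \to \infty} G(\langle f^{m_i}x_0\rangle_{i=1}^n) = r$. The main obstacle is Step~3: because $G(\langle x\rangle_1^{n-1}, y)$ is asymmetric in its two slots when $n \ge 3$, the reverse term $G(\langle f^{p_k}x_0\rangle_1^{n-1}, f^{p_k - 1}x_0)$ produced by the triangle inequality is not equal to $b_{p_k - 1}$ and must be controlled using Lemma~\ref{basicinequality}(c); this is what distinguishes the partial $n$-metric version from the classical metric-space $\phi$-contraction argument.
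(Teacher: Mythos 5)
Your overall strategy matches the paper's: first show the consecutive distances tend to $r$, then run a contradiction with a minimally-chosen index pair, identify $\lim_k a(p_k,q_k)=r+\epsilon$ via \textbf{(ptri)}, and finally invoke the contraction plus continuity of $\phi$ and Lemma~\ref{equivCauchy}. Where you diverge is the endgame. The paper shifts the offending pair \emph{up} by one using Lemma~\ref{basicinequality}(b) --- $s_k \le G(\langle x_{m_1(k)+1}\rangle_1^{n-1}, x_{m_2(k)+1}) + (\text{corrections}) \le s_k - \phi(s_k) + n(t_k - r)$ --- which yields $\phi(s_k)\to 0$ directly and never needs a two-sided limit for the shifted pair. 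You shift \emph{down}, which forces you to prove $a(p_k-1,q_k-1)\to r+\epsilon$ from both sides before applying the contraction; that works, but it is the longer route, and your more careful handling of the cases $p_k=q_k$ and $q_k<p_k$ (via Lemma~\ref{basicinequality}(c)) is actually a point where you are more complete than the paper.

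There is one genuine inaccuracy in your Step~3: the displayed inequality bounding $a(p_k,q_k)$ in terms of $a(p_k-1,q_k-1)$ is not what the triangle inequality gives for $n\ge 3$. Replacing the block of $n-1$ copies of $f^{p_k}x_0$ by $f^{p_k-1}x_0$ costs $n-1$ separate applications of \textbf{(ptri)} (this is exactly Lemma~\ref{basicinequality}(a) with $k=n-1$), so the correct correction term is $(n-1)\bigl(G(\langle f^{p_k-1}x_0\rangle_1^{n-1}, f^{p_k}x_0) - G(\langle f^{p_k-1}x_0\rangle_1^n)\bigr)$, i.e.\ $(n-1)(b_{p_k-1}-G(\langle f^{p_k-1}x_0\rangle_1^n))$, not a single reverse-consecutive term with one subtracted self-distance. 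The slip is harmless --- the corrected term still tends to $0$ by your Step~1, so $\liminf_k a(p_k-1,q_k-1)\ge r+\epsilon$ survives --- and in fact with the correct form of Lemma~\ref{basicinequality}(b) you do not even need the reverse-consecutive limit $G(\langle f^{m+1}x_0\rangle_1^{n-1},f^mx_0)\to r$ that you single out as the main obstacle. A smaller nit: in your upper bound for $a(p_k-1,q_k-1)$ you assert the right-hand side converges to $r+\epsilon$, but you only know $a(p_k,q_k-1)<r+\epsilon$; this justifies $\limsup\le r+\epsilon$, which is all you use.
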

\begin{proof}
Set $x_{m+1} := fx_m$ for $m\in\bN$. \newline
Let $\phi: [r, \infty)\to [0, \infty)$ witness the fact that $f$ is orbitally $\phi_r$-contractive at $x_0$. In particular, $\phi$ is a continuous non-decreasing function with $\phi(r) = 0$ and $\phi(t) > 0$ for all $t > r$. Then, for all $m\in\bN$, we have
$$r\le G(\langle x_{m+2}\rangle_1^n) \le G(\langle x_{m+2}\rangle_1^{n-1}, x_{m+1}) \le G(\langle x_{m+1}\rangle_1^{n-1}, x_{m}) - \phi(G(\langle x_{m+1}\rangle_1^{n-1}, x_{m})).$$
Set $t_m := G(\langle x_{m+1}\rangle_1^{n-1}, x_{m})$. Then one obtains
\begin{eqnarray}		\label{tnseq}
r\le t_{m+1}\le t_m - \phi(t_m) \le t_m.
\end{eqnarray}
This implies that $\langle t_m\rangle_{m\in\bN}$ is a non-increasing sequence of real numbers bounded below by $r$, and hence converges to some $L\ge r$. We claim that $L = r:$ otherwise $L > r$, and hence $\phi(L) > 0$. Since $\phi$ is non-decreasing, we get $\phi(L) \le \phi(t_m)$ for all $m\in\bN$. Due to (\ref{tnseq}), we have $t_{m+1} \le t_m - \phi(t_m) \le t_m - \phi(L)$, and so
$$t_{m+2} \le t_{m+1} - \phi(t_{m+1}) \le t_m - \phi(t_m) - \phi(t_{m+1}) \le t_m - 2\phi(L).$$
Inductively, we obtain $t_{m+k} \le t_m - k\phi(L)$, which is a contradiction for large enough $k\in\bN$. Thus, we have $\phi(L) = 0$, and hence $L = r$. Consequently, $\lim_{m\to\infty} G(\langle x_{m+1}\rangle_1^{n-1}, x_{m}) = r$.

Now we show that
$$\lim_{m_1, m_2\to\infty} G(\langle x_{m_1}\rangle_1^{n-1}, x_{m_2}) = r.$$
Suppose it is not the case. Then there exists $\epsilon_0 > r$ and two sequences of integers $\langle m_1(k)\rangle_{k\in\bN}$ and $\langle m_2(k)\rangle_{k\in\bN}$ such that $m_1(k) > m_2(k) \ge k$ and
\begin{eqnarray}		\label{greatereps}
s_k := G(\langle x_{m_1(k)}\rangle_1^{n-1}, x_{m_2(k)}) \ge\epsilon_0
\end{eqnarray}
for all $k\in\bN$. Since $\lim_{m\to\infty} G(\langle x_{m+1}\rangle_1^{n-1}, x_{m}) = r$, we can also assume without loss of generality that $G(\langle x_{m_1(k)-1}\rangle_1^{n-1}, x_{m_2(k)}) < \epsilon_0$ for each $k\in\bN$. Thus, we have
\begin{eqnarray*}
\epsilon_0 \;\le\; s_k & = & G(\langle x_{m_1(k)}\rangle_1^{n-1}, x_{m_2(k)}) \\
& \le & G(\langle x_{m_1(k)}\rangle_1^{n-1}, x_{m_1(k)-1}) + G(\langle x_{m_1(k) - 1}\rangle_1^{n-1}, x_{m_2(k)}) - G(\langle x_{m_1(k) - 1}\rangle_1^n) \\
& < & t_{m_1(k) - 1} + \epsilon_0 - r \\
& \le & t_k + \epsilon_0 - r.
\end{eqnarray*}
Since $\lim_{k\to\infty} t_k = r$, we have $\lim_{k\to\infty} (t_k + \epsilon_0 - r) = r + \epsilon_0 - r = \epsilon_0$. Consequently,
$$\lim_{k\to\infty} s_k = \epsilon_0.$$ 
On the other hand, by Lemma~\ref{basicinequality}(b), we have
\begin{eqnarray*}
s_k & = & G(\langle x_{m_1(k)}\rangle_1^{n-1}, x_{m_2(k)}) \\
& \le & G(\langle x_{m_1(k)+1}\rangle_1^{n-1}, x_{m_2(k)+1}) + (G(\langle x_{m_2(k)+1}\rangle_1^{n-1}, x_{m_2(k)}) - G(\langle x_{m_2(k)+1}\rangle_1^n)) \\
& & +\; \sum_{j=1}^{n-1} (G(\langle x_{m_1(k)+1}\rangle_1^{n-1}, x_{m_1(k)}) - G(\langle x_{m_1(k)+1}\rangle_1^n)) \\
& \le & (n-1)\, t_{m_1(k)} + t_{m_2(k)} - nr + G(\langle x_{m_1(k)+1}\rangle_1^{n-1}, x_{m_2(k)+1}) \\
& \le & nt_k - nr + G(\langle x_{m_1(k)}\rangle_1^{n-1}, x_{m_2(k)}) - \phi(G(\langle x_{m_1(k)}\rangle_1^{n-1}, x_{m_2(k)}))\\
& = & nt_k - nr + s_k - \phi(s_k) \\
\implies \phi(s_k) & \le & nt_k - nr.
\end{eqnarray*}
Again, since $\lim_{k\to\infty} t_k = r$, we have $\lim_{k\to\infty} (nt_k - nr) = nr - nr = 0$. Since $\phi(s_k) \ge 0$ for all $k\in\bN$ (by the definition of $\phi$), we get that $\lim_{k\to\infty}\phi(s_k) = 0$. Since $\phi$ is continuous, it then follows that
$$0 = \lim_{k\to\infty}\phi(s_k) = \phi\Big(\lim_{k\to\infty} s_k\Big) = \phi(\epsilon_0),$$
which contradicts the fact that $\epsilon_0 > r$. Hence, $\lim_{m_1, m_2\to\infty} G(\langle x_{m_1}\rangle_1^{n-1}, x_{m_2}) = r$.\newline
By Lemma~\ref{equivCauchy}, we thus obtain
$$\lim_{m_1, \ldots, m_n\to\infty} G(\langle x_{m_i}\rangle_{i=1}^n) = r,$$
and hence $\langle f^mx_0\rangle_{m\in\bN}$ is Cauchy.
%
\end{proof}

It thus follows that an orbitally $\phi_r$-contractive map is Cauchy for every $r\in\bR$. Because of the Cauchy property, the orbitally $\phi_r$-contractive functions provide further examples of fixed point theorems. By similar proofs as we had for Corollaries~\ref{cauchyorbcontractivefpt} \& \ref{strongcauchyorbcontractivefpt}, we obtain
\begin{cor}		\label{phircontmapcor}
Let $(X, G)$ be a partial $n$-metric space, $r\in\bR$, $x_0\in X$, and $f:X\to X$ be a map such that $f$ is orbitally $\phi_r$-contractive at $x_0$ and $(X, G)$ is orbitally complete for $f$. Further assume that one of the following holds:
\begin{enumerate}
\item $f$ is non-expansive and orbitally continuous at $x_0$; 
\item $f$ is non-expansive and the self distances of $(X, G)$ are bounded below by $r$.
\end{enumerate}
Then there exists $a\in X$ such that $fa = a$ and $G(\langle a\rangle_1^n) = r$.
\end{cor}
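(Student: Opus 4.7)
The plan is to mimic the proof of Corollary~\ref{cauchyorbcontractivefpt} verbatim, substituting Lemma~\ref{phircontmaplem} for Lemma~\ref{orbcontrCauchy} at the one place where the contractivity hypothesis is actually used. Concretely, the argument breaks into four steps: (i) apply the Cauchy property of orbitally $\phi_r$-contractive maps, (ii) invoke orbital completeness to extract a special limit, (iii) identify the self distance of that special limit with $r$, and (iv) apply Theorem~\ref{cauchyorbcontnexpfpt} to promote $a$ to a fixed point.

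First, by Lemma~\ref{phircontmaplem}, since $f$ is orbitally $\phi_r$-contractive at $x_0$, the orbit $\langle f^m x_0\rangle_{m\in\bN}$ is a Cauchy sequence in $(X,G)$ and in fact
$$\lim_{m_1, \ldots, m_n\to\infty} G(\langle f^{m_i}x_0\rangle_{i=1}^n) = r.$$
Since $(X, G)$ is orbitally complete for $f$, Definition~\ref{orbcomp} yields a special limit $a\in X$ of $\langle f^m x_0\rangle_{m\in\bN}$. By Definition~\ref{cauchylimit}, a special limit satisfies
$$G(\langle a\rangle_1^n) = \lim_{m_1, \ldots, m_n\to\infty} G(\langle f^{m_i}x_0\rangle_{i=1}^n) = r,$$
which already gives the value of the self distance asserted in the conclusion.

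To conclude $fa = a$, one feeds $a$ into Theorem~\ref{cauchyorbcontnexpfpt}. In case (1) of the corollary, $f$ is non-expansive and orbitally continuous at $x_0$; in particular $f$ is orbitally continuous at $x_0$ for $a$, which matches case (1) of Theorem~\ref{cauchyorbcontnexpfpt}. In case (2) of the corollary, the self distances of $(X, G)$ are bounded below by $r = G(\langle a\rangle_1^n)$, which together with non-expansiveness matches case (3) of Theorem~\ref{cauchyorbcontnexpfpt}. In either case the theorem yields $fa = a$, completing the argument.

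There is no real obstacle here: all the heavy lifting sits in Lemma~\ref{phircontmaplem} (which establishes the Cauchy property from the $\phi_r$-contractive hypothesis via monotone-convergence and a contradiction argument) and in Theorem~\ref{cauchyorbcontnexpfpt}. The only thing to check carefully is the bookkeeping between the hypotheses of this corollary and the cases of Theorem~\ref{cauchyorbcontnexpfpt}, and in particular the substitution $G(\langle a\rangle_1^n) = r$ that makes ``self distances bounded below by $r$'' the same as ``self distances bounded below by $G(\langle a\rangle_1^n)$'', which is exactly what is needed to invoke case (3) of the theorem.
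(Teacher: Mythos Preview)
Your proposal is correct and follows exactly the approach the paper indicates: the paper simply states that Corollary~\ref{phircontmapcor} follows ``by similar proofs as we had for Corollaries~\ref{cauchyorbcontractivefpt} \& \ref{strongcauchyorbcontractivefpt}'', and you have carried out precisely that substitution, replacing Lemma~\ref{orbcontrCauchy} by Lemma~\ref{phircontmaplem} and then invoking orbital completeness, the identity $G(\langle a\rangle_1^n)=r$, and Theorem~\ref{cauchyorbcontnexpfpt}. The case bookkeeping you spell out (matching case~(1) here to case~(1) of Theorem~\ref{cauchyorbcontnexpfpt}, and case~(2) here to case~(3) of the theorem via $r=G(\langle a\rangle_1^n)$) is exactly right.
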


\begin{cor}		\label{phircontmapcorstrong}
Let $(X, G)$ be a strong partial $n$-metric space, $r\in\bR$, $x_0\in X$, and $f:X\to X$ be a map such that $f$ is orbitally $\phi_r$-contractive at $x_0$ and $(X, G)$ is orbitally complete for $f$. Further assume that one of the following holds:
\begin{enumerate}
\item $f$ is non-expansive;
\item $f$ is orbitally continuous at $x_0$.
\end{enumerate}
Then there exists $a\in X$ such that $fa = a$ and $G(\langle a\rangle_1^n) = r$.
\end{cor}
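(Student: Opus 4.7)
The plan is to assemble this corollary from two pieces already established: Lemma~\ref{phircontmaplem}, which shows that an orbitally $\phi_r$-contractive map produces a Cauchy orbit with the right limiting $G$-value, and Theorem~\ref{strongcauchyorbcontnexpfpt}, which provides a fixed point for a Cauchy map on a strong partial $n$-metric space under either non-expansiveness or orbital continuity at $x_0$ for the special limit. This exactly parallels the derivation of Corollary~\ref{cauchyorbcontractivefpt} from Lemma~\ref{orbcontrCauchy} and Theorem~\ref{cauchyorbcontnexpfpt}, with the ``strong'' versions substituted throughout, so the proof will be very short.

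First I would invoke Lemma~\ref{phircontmaplem} to conclude that the orbit $\langle f^mx_0\rangle_{m\in\bN}$ is a Cauchy sequence in $(X, G)$ with $\lim_{m_1,\ldots,m_n\to\infty} G(\langle f^{m_i}x_0\rangle_{i=1}^n) = r$. Next, using that $(X, G)$ is orbitally complete for $f$, I would extract a special limit $a\in X$ of this Cauchy orbit. From Definition~\ref{cauchylimit}, being a special limit forces
$$G(\langle a\rangle_1^n) = \lim_{m_1,\ldots,m_n\to\infty} G(\langle f^{m_i}x_0\rangle_{i=1}^n) = r,$$
which already gives the self-distance condition appearing in the conclusion.

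For the fixed-point assertion, I would observe that $f$ is Cauchy at $x_0$ with special limit $a$, that $(X, G)$ is a strong partial $n$-metric space, and that one of the two alternatives (1) or (2) of the statement holds by assumption. These are exactly the hypotheses of Theorem~\ref{strongcauchyorbcontnexpfpt}, so the theorem immediately yields $fa = a$. Since every step is a direct appeal to a prior result, there is no substantial obstacle; the only care needed is to note that $G(\langle a\rangle_1^n) = r$ follows at once from the definition of a special limit combined with the limit value supplied by Lemma~\ref{phircontmaplem}, and to verify that the alternatives (1) and (2) here are literally the alternatives required by Theorem~\ref{strongcauchyorbcontnexpfpt}.
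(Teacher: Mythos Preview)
Your proposal is correct and is exactly the approach the paper takes: the paper does not even spell out a proof here, simply noting that Corollaries~\ref{phircontmapcor} and~\ref{phircontmapcorstrong} follow by the same argument as Corollaries~\ref{cauchyorbcontractivefpt} and~\ref{strongcauchyorbcontractivefpt}, with Lemma~\ref{phircontmaplem} in place of Lemma~\ref{orbcontrCauchy}. The only nitpick is your word ``literally'': alternative~(2) in the corollary is \emph{orbitally continuous at $x_0$}, which by Definition~\ref{orbcont} is slightly stronger than the hypothesis \emph{orbitally continuous at $x_0$ for $a$} in Theorem~\ref{strongcauchyorbcontnexpfpt}; the implication is immediate, so this is not a gap.
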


\section{$n$-metric space}
Finally, in this last section, we give a new definition of an $n$-metric space that generalizes the notion of a metric space. This definition is weaker than the definition of a generalized $n$-metric given by Khan \cite{KAK_nmetric}. In particular, our $n$-metric is not required to satisfy condition [G3] of Khan's definition.
\begin{defn}
Let $(X, G)$ be a partial $n$-metric space. We say $(X, G)$ is an {\em $n$-metric space} if for all $x\in X$, we have
$$G(\langle x\rangle_1^n) = 0.$$
\end{defn}

We have the following result.
\begin{lem}		\label{nmetricT2}
Let $(X, G)$ be an $n$-metric space. Then for all $x, y\in X$ with $x\neq y$, we have
$$G(\langle x\rangle_1^{n-1}, y) > 0.$$
\end{lem}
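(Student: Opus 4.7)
The plan is to combine condition \textbf{(ssd)} with Lemma~\ref{basicinequality}(c) and the separation condition \textbf{(sep)} to reach a contradiction.

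First I would observe that by condition \textbf{(ssd)} applied to $x$ and $y$, we immediately get
$$0 = G(\langle x\rangle_1^n) \le G(\langle x\rangle_1^{n-1}, y),$$
so the nonnegativity is free. To upgrade this to strict positivity, I would argue by contradiction: suppose $G(\langle x\rangle_1^{n-1}, y) = 0$ for some $x \neq y$.

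Next, I would apply Lemma~\ref{basicinequality}(c), which gives
$$G(\langle y\rangle_1^{n-1}, x) \le (n-1)\, G(\langle x\rangle_1^{n-1}, y) - (n-2)\, G(\langle x\rangle_1^n).$$
Since $(X, G)$ is an $n$-metric space, $G(\langle x\rangle_1^n) = 0$, and by assumption $G(\langle x\rangle_1^{n-1}, y) = 0$, so the right-hand side equals $0$. Combined with the \textbf{(ssd)} bound $G(\langle y\rangle_1^{n-1}, x) \ge G(\langle y\rangle_1^n) = 0$, this forces
$$G(\langle y\rangle_1^{n-1}, x) = 0 = G(\langle y\rangle_1^n).$$
Similarly, by hypothesis $G(\langle x\rangle_1^{n-1}, y) = 0 = G(\langle x\rangle_1^n)$.

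Finally, I would invoke the separation condition \textbf{(sep)}: the two equalities $G(\langle x\rangle_1^{n-1}, y) = G(\langle x\rangle_1^n)$ and $G(\langle y\rangle_1^{n-1}, x) = G(\langle y\rangle_1^n)$ imply $x = y$, contradicting the choice of $x \neq y$. Hence the assumed equality cannot hold, and the inequality in \textbf{(ssd)} must be strict, giving $G(\langle x\rangle_1^{n-1}, y) > 0$. I do not anticipate any real obstacle here; the only subtle point is remembering that \textbf{(sep)} requires the equality of self-distances on both sides (the $x$-side and the $y$-side), which is exactly what Lemma~\ref{basicinequality}(c) allows us to deduce from the one-sided assumption.
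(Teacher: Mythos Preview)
Your proof is correct and follows essentially the same approach as the paper: use \textbf{(ssd)} for nonnegativity, assume equality, apply Lemma~\ref{basicinequality}(c) to force the symmetric equality $G(\langle y\rangle_1^{n-1}, x) = G(\langle y\rangle_1^n)$, and then invoke \textbf{(sep)} for the contradiction. (In fact, your attribution of the first inequality to \textbf{(ssd)} is cleaner than the paper's, which cites \textbf{(sep)} there.)
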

\begin{proof}
Let $x, y\in X$ with $x\neq y$.

By condition {\bf(sep)}, we have $G(\langle x\rangle_1^{n-1}, y) \ge G(\langle x\rangle_1^n) = 0$.

For a contradiction, let us assume that $G(\langle x\rangle_1^{n-1}, y) = 0$, i.e., $G(\langle x\rangle_1^{n-1}, y) = G(\langle x\rangle_1^n)$.

By Lemma~\ref{basicinequality}(c), we obtain
$$0\le G(\langle y\rangle_1^{n-1}, x) \le (n-1)\,G(\langle x\rangle_1^{n-1}, y) - (n-2)\,G(\langle x\rangle_1^n) = 0,$$
i.e., $G(\langle y\rangle_1^{n-1}, x) = 0 = G(\langle y\rangle_1^n)$.

By condition {\bf(sep)}, it follows that $x = y$, which gives our required contradiction.
\end{proof}

As before (cf. Lemma~\ref{partialnmetricdistance}), one can define a metric $d_G:X\times X\to\bR^{\ge 0}$ on an $n$-metric space $(X, G)$ as follows:
$$d_G(x, y) := G(\langle x\rangle_1^{n-1}, y) + G(\langle y\rangle_1^{n-1}, x).$$
Let us denote the corresponding metric topology induced by $d_G$ on $X$ as $\tau[d_G]$. Then we have the following result.
\begin{prop}		\label{nmetricsameasmetric}
Let $(X, G)$ be an $n$-metric space. Then the topologies $\tau[G]$ and $\tau[d_G]$ are the same.
\end{prop}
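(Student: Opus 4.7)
The plan is to show directly that each basic open set of $\tau[G]$ is $\tau[d_G]$-open, and vice versa, by centering a ball of the other topology at an arbitrary interior point. The starting observation is that in an $n$-metric space the $\tau[G]$-ball simplifies: since $G(\langle x\rangle_1^n) = 0$, we have $B_\epsilon(x) = \{y \mid G(\langle x\rangle_1^{n-1}, y) < \epsilon\}$, and similarly every self-distance appearing below vanishes.

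For $\tau[G] \subseteq \tau[d_G]$, I would fix $y \in B_\epsilon(x)$ and set $\delta := \epsilon - G(\langle x\rangle_1^{n-1}, y) > 0$. Given any $z$ with $d_G(y, z) < \delta$, condition \textbf{(ptri)} gives
$$G(\langle x\rangle_1^{n-1}, z) \le G(\langle x\rangle_1^{n-1}, y) + G(\langle y\rangle_1^{n-1}, z) - G(\langle y\rangle_1^n) = G(\langle x\rangle_1^{n-1}, y) + G(\langle y\rangle_1^{n-1}, z).$$
Since $G(\langle y\rangle_1^{n-1}, z) \le d_G(y, z) < \delta$, this shows $G(\langle x\rangle_1^{n-1}, z) < \epsilon$, so $B^{d_G}_\delta(y) \subseteq B_\epsilon(x)$, i.e., $B_\epsilon(x)$ is $\tau[d_G]$-open.

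For $\tau[d_G] \subseteq \tau[G]$, I would fix $y \in B^{d_G}_\epsilon(x)$ and look for a $\tau[G]$-ball around $y$ inside $B^{d_G}_\epsilon(x)$. The key tool is Lemma~\ref{basicinequality}(c): swapping the roles of $x$ and $y$ and using $G(\langle y\rangle_1^n) = 0$, one obtains $G(\langle z\rangle_1^{n-1}, y) \le (n-1)\,G(\langle y\rangle_1^{n-1}, z)$, and therefore
$$d_G(y, z) = G(\langle y\rangle_1^{n-1}, z) + G(\langle z\rangle_1^{n-1}, y) \le n\,G(\langle y\rangle_1^{n-1}, z).$$
Setting $\delta := (\epsilon - d_G(x, y))/n > 0$, the triangle inequality for $d_G$ (Lemma~\ref{partialnmetricdistance}) gives $d_G(x, z) \le d_G(x, y) + d_G(y, z) < d_G(x, y) + n\delta = \epsilon$ for any $z \in B_\delta(y)$, so $B_\delta(y) \subseteq B^{d_G}_\epsilon(x)$.

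The only real obstacle is the asymmetry of $G$ in its arguments: without some control comparing $G(\langle y\rangle_1^{n-1}, z)$ with $G(\langle z\rangle_1^{n-1}, y)$, the second inclusion would fail. That asymmetric estimate is exactly what Lemma~\ref{basicinequality}(c) supplies in the $n$-metric setting (where the $-(n-2)G(\langle y\rangle_1^n)$ correction term vanishes), and it is what makes both balls squeeze inside one another up to a factor of $n$. Combining the two inclusions then yields $\tau[G] = \tau[d_G]$.
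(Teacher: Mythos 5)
Your proof is correct and rests on the same key ingredient as the paper's, namely the two-sided comparison $G(\langle x\rangle_1^{n-1},y)\le d_G(x,y)\le n\,G(\langle x\rangle_1^{n-1},y)$ obtained from nonnegativity and Lemma~\ref{basicinequality}(c). The paper packages this slightly more economically as the concentric-ball sandwich $B^G_{\epsilon/n}(x)\subseteq B^{d_G}_\epsilon(x)\subseteq B^G_\epsilon(x)$, whereas you re-center a ball at each interior point, but the mathematical content is the same.
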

\begin{proof}
Fix $x\in X$ and $\epsilon > 0$. Let us denote the open ball with center $x$ and radius $\epsilon$ in the topology $\tau[G]$ by $B^G_\epsilon(x)$ and that in the topology $\tau[d_G]$ by $B^{d_G}_\epsilon(x)$, i.e.,
\begin{eqnarray*}
B^G_\epsilon(x) & = & \{y\in X\mid G(\langle x\rangle_1^{n-1}, y) < \epsilon\} \\
B^{d_G}_\epsilon(x) & = & \{y\in X\mid d_G(x, y) < \epsilon\}.
\end{eqnarray*}
We now show that
$$B^G_{\frac{\epsilon}{n}}(x) \subseteq B^{d_G}_\epsilon(x) \subseteq B^G_\epsilon(x),$$
which suffices to show that the two topologies $\tau[G]$ and $\tau[d_G]$ are the same.

It follows trivially from the definition of $d_G$ and the fact that $G(\langle x\rangle_1^{n-1}, y)\ge 0$ for all $x, y\in X$ that
$$B^{d_G}_\epsilon(x) \subseteq B^G_\epsilon(x).$$
On the other hand, we have by Lemma~\ref{basicinequality}(c) that
$$d_G(x, y) = G(\langle x\rangle_1^{n-1}, y) + G(\langle y\rangle_1^{n-1}, x) \le G(\langle x\rangle_1^{n-1}, y) + (n-1) G(\langle x\rangle_1^{n-1}, y) = nG(\langle x\rangle_1^{n-1}, y).$$
And hence, it follows trivially that $B^G_{\frac{\epsilon}{n}}(x) \subseteq B^{d_G}_\epsilon(x)$.
\end{proof}

Now we give an example of a two-point 5-metric space to illustrate the fact that the value of $G$ can be negative on certain tuples even though all the self-distances are zero. This also illustrates the fact that our axioms for a partial $n$-metric do not require the images of all $n$-tuples under $G$ to be comparable, even in an $n$-metric space $(X, G)$. Finally, this also illustrates why our definition is weaker than that of Khan's.
\begin{example}
Let $X = \{a, b\}$ be a two-point space. Define $G:X^5\to\bR$ as follows:
\begin{center}
\begin{tabular}{lcl}$G(a, a, a, a, a) = 0$ & & $G(b, b, b, b, b) = 0$\\ $G(a, b, b, b, b) = 4$ & & $G(b, a, a, a, a) = 3$ \\ $G(a, a, b, b, b) = 2$ & & $G(b, b, a, a, a) = -1$.\end{tabular}\\
Extend $G$ to the other tuples using condition {\bf(sym)}.
\end{center}
Observe that $G(b, b, a, a, a) = -1 < 0$. \newline
We leave it to the reader to verify that $(X, G)$ is indeed a 5-metric space.
\end{example}

\bibliographystyle{amsplain}
\bibliography{references}

\end{document}